\documentclass[runningheads, letter]{llncs}
\usepackage{hyperref}

\usepackage[ruled, lined, linesnumbered]{algorithm2e}
\SetKwProg{Try}{try}{:}{}
\SetKwProg{Catch}{catch}{:}{end}
\usepackage{algpseudocode}
\usepackage{graphicx}
\usepackage{extarrows}
\usepackage{amssymb,amscd}
\usepackage[all]{xy}
\usepackage{longtable}
\usepackage[latin1]{inputenc}
\usepackage{tikz,arydshln}
\usetikzlibrary{shapes,arrows}

\newcommand{\rmv}[1]{}

\newcommand{\Z}{{\mathbb Z}}

\newcommand{\F}{{\mathbb F}}

\newcommand{\Q}{{\mathbb Q}}

\begin{document}
\mainmatter
\title
{Deterministic and Efficient Ideal Arithmetic via
  Two-Element Representations}

\author{Qi Cheng\inst{1} %
 }
 \institute{
   School of Computer Science\\
   University of Oklahoma\\
 Norman, OK 73019, USA.\\
 Email: {\tt qchengnorman@outlook.com}
\footnote{We thank the U. S. National Science Foundation for
  their support through the grant \texttt{CCF-2530361}.}
}
\maketitle

\begin{abstract}
  Given an ideal in a number field, it is desirable in many situations
  to find two elements that generate the ideal over the ring of the
  integers of the field.  Existing algorithms are either randomized,
  or impractical at cryptographic sizes.
  In the paper, we present a deterministic
  polynomial time algorithm to find the two-element representation of
  an ideal.  For a monic irreducible integral polynomial \( f(x) \),
  let \( K=\Q[x]/(f) \) be the number field, and
  \( O_K \) be the integral closure. Our algorithm works when
  the norm of the input ideal is co-prime to the index
  \( [O_K:\Z[x]/f] \). In particular, it
  handles all ideals for monogenic \( f(x) \),
  a class that includes the cyclotomic polynomials widely used in
  lattice based cryptography.  A key technical ingredient in our
  result is a generalized version of Dedekind criterion.
\end{abstract}

\section{Introduction}

Due to the imminent threat of quantum algorithms to
cryptosystems based on prime factorization and discrete logarithm
\cite{Shor94}, cryptographers are motivated
to design new cryptographic schemes that can resist quantum attacks.
Hard ideal lattice problems underpin the security of the Ring
Learning-With-Errors (RLWE) problem, a versatile and widely used
primitive in post quantum cryptography \cite{HPS98,LPR10,SSTX09}.
A large number of algorithms are proposed, including NewHope
\cite{NewHope}, Crystals-Kyber \cite{Kyber} and LAC \cite{LAC}.
These applications call for efficient implementation of
ideal arithmetic, an important topic in computational number
theory that was first studied in \cite{Cohen93,Cohen2012},
and the references therein.
It is well-known that for a number field,
two elements are sufficient to generate any fractional ideal as a
module over its ring of integers.  Several algorithms
for finding such representation have been proposed,
but they are either randomized or inefficient,
due to expensive prime factorization.
An ideal that admits a single generator is called principal. 
Interestingly there is no known efficient algorithm
to decide the principality of an ideal, let alone to find a
generator, even randomness is allowed.

\subsection{Description of the problem}

Let \( f(x) \) be a monic irreducible polynomial over \( \Z \)
of degree \( n \). Let \( K= \Q[x]/(f) \) be the number field.
We denote by \( O_K \)  the integral closure
of \( \Z[x]/f(x) \). It is a free \( \Z-\)module, but finding
its integral basis is in general difficult, often
requiring the prime factorization of the discriminant of \( f(x) \). 
An ideal in \( O_K \) is also a free \( \Z-\)module.  
If space complexity is not a primary concern,
then one \cite{Cohen2012}  can represent an ideal by its \( \Z-\)basis,
each of its elements being written in \( \Z-\)basis of \( O_K \).
This gives us an integral matrix (lattice) representation for an
ideal.

By sending \( x \) to one of the roots \( \alpha \)
of \( f(x) \) in \( K \), we establish an exact sequence
of \( \Z \)-modules: 
\[ 0 \rightarrow (f(x)) \rightarrow  \Z[x] \rightarrow O_K  \]
If the index \( [ O_K:\Z[x]/(f) ] \) is 1, then the field---
and the polynomial \( f(x) \)---
are called monogenic; the ring of integers has a power basis
\( \{1, \alpha, \alpha^2, \cdots, \alpha^{n-1}\} \);
and the sequence can be extended to \( 0 \):
\[ 0 \rightarrow (f(x)) \rightarrow  \Z[x] \rightarrow
  O_K \rightarrow 0. \]
This short exact sequence allows us to lift an ideal in \( O_K \) 
back to  an ideal in \( \Z[x]  \), and  compute in \( \Z[x] \),
which is often more efficient than  dealing with integral
matrix representations of ideals.
Moreover, ideals  \( I \subseteq \Z[x] \) and
\( I + f(x)\Z[x] \subseteq \Z[x]\)  
will be mapped to the same ideal in \( O_K \).
It means that we may freely add \( f(x) \) into an
ideal in \( \Z[x] \), without changing its image in \( O_K \).
Note that \( \Z[x] \) has ideals that can not be generated by
two elements (for instance \( (5, x)^3 \)).

In general, the problem of finding two-element representation
can be stated as follows: giving
\( a_1, a_2, \cdots, a_r  \) in \( O_K \),
to find \( b_1, b_2 \) in \( O_K \) so that
\[ a_1 O_K + a_2 O_K + \cdots + a_r O_K = b_1 O_K + b_2 O_K  \]
For example, if the ideal is given by its \( \Z \)-basis,
then \( r=n \).
We can assume that \( r=3 \), since \( a_4, a_5, \cdots \) 
can be absorbed into two elements recursively.
We can find an integer in the input ideal and assign it to
\( a_1 \). One of the two generators of the output ideal
can also be an integer.
So we should go one step further, and assume that
both \( a_1 \) and \( b_1\) are integers, and reformulate
the problem as:

\begin{quote}
 {\bf Input:} \( {\cal N} \in \Z, a(x) \in \Z[x], b(x)  \in \Z[x]\)

 {\bf Output:} \( M \in \Z, c (x) \in \Z[x] \) so that
 \[ {\cal N} O_K + a (\alpha) O_K + b (\alpha) O_K
   = M O_K + c(\alpha) O_K  \]
\end{quote}

In this paper, we focus on the most difficult case
where \( {\cal N} \) 
is a large composite number free of small prime factors.
Once we have an efficient algorithm for the above problem,
ideal addition is straight-forward:
\[ (N_1, a(x)) + (N_2, b(x)) = ( gcd(N_1, N_2), a(x), b(x) )  \]
Ideal multiplication and powering are core subroutines in
our algorithm, and they will be treated in
Lemma~\ref{lem:relativeprime} and
Section~\ref{sec:powerandmulti}.

\subsection{Previous works}

It has long been observed that two-element representations
can speed up computation significantly.
Pohst and Zassenhaus \cite{PZ85} utilized them
to calculate class groups.
Since their work dealt with ideals of small norms,
they can afford to compute two-element representations
by factoring ideals into  small prime ideals.
See also \cite{GMN13}.
It is difficult to generalize their method to ideals of
large norm, due to a few apparent difficulties here.
First the norm of the ideal need to be factored into primes,
which is hard. Second one must find  prime ideals
lying above a rational prime, which  requires to factor
polynomials over finite fields. It relies on  randomized algorithms
in the current state of the art \cite{KedlayaUm11}.

Another deterministic algorithm was sketched
in \cite[Subsection 6.3.2]{Belabas04}.
However, the time complexity was not analyzed, and
the authors deemed it impractical if
the ideal can not be factored.

A simple randomized algorithm was proposed in
\cite{MMPT23,PS21,FS10}.
The algorithm first finds \( a\in I \),
and then pick a random element \( r \in I/(a) \).
It can be proved that with high probability,
\( I = a O_K + r O_k \).

\subsection{Our result}

Assume that the input ideal is \( ({\cal N}, a(\alpha), b(\alpha)) \).
We address small prime factors of \( {\cal N} \) separately
via the Pohst-Zassenhaus algorithm.
Factoring polynomials in small finite fields does not
require randomness. So we may assume that \( {\cal N} \)
has no small prime factors (and is thus hard to factor).

{\bf Stage 1}
In the first stage of our algorithm, we factor the input ideal
into a product of ideals in two-element representation,
and ideals of a special form, described below.
The basic idea is to try to compute \( gcd(a(x), b(x)) \) 
over \( \Z/{\cal N} \) using the Euclidean algorithm. 
If the computation succeeds, we find a polynomial \( c (x) \),
and obtain two-element representation
\( {\cal N} O_K + c(\alpha) O_K \).
This should solve the problem for many inputs if \( {\cal N} \)
is free of small prime factors.

An obstacle in this step is a zero divisor in \( \Z/{\cal N} \)
appearing as the leading coefficient of some polynomials.
If additionally \( {\cal N} \) is not a perfect power,
we split \( {\cal N}  \) into factors which are co-prime
to each other.
This allows us to split the input
ideal. We then work on each factor ideal separately.

A degenerate case, when \( {\cal N} = N^e \) 
is a perfect power, must be handled separately.
For this case, we  run the Euclidean algorithm
on \( a(x) \) and \( b(x) \) over \( \Z/{N} \)
(rather than over \( \Z/{\cal N} \)), 
producing an ideal of form
\begin{equation}\label{eq:onegennotN}
 (N^e, a_1 (x), N d_1(x), N d_2(x), \cdots ),
\end{equation}
where only one  generator is not a multiple of
\( N \). We add \( f(x) \) into the ideal, and
it guarantees that \( a_1(x) \) is a factor of \( f(x) \)
over \( \Z/N \).

In the end, we factor the input ideal into
a product of factors in special forms:
\[ ({\cal N}, a(\alpha), b(\alpha))=
  \prod_{i=1}^t (N_i, a_i (x)) \prod_{i=t+1}^s (N_i^{e_i}, a_i (x),
  N_i d_{ 1 i}(x), N_i d_{ 2 i}(x), \cdots).\]

{\bf Stage 2}
The second stage of our algorithm will handle
ideals in the form of (\ref{eq:onegennotN}).
One can clearly read off a factor \( (N, a_1(\alpha)) \)
from the ideal that is in two-element representation.
The other factors will also likely contain \( N \).
How do we multiple \( (N, a_1(\alpha)) (N, a_2(\alpha)) \)
into a two-element form? Preferably the product is
\( (N^2, a_1(\alpha) a_2(\alpha)) \),
but this is not always true.
\begin{example}\label{ex:1}
  Consider the 8-th cyclotomic field with ring of
  integers \( \Z[x]/(x^4+1)  \).
  The product of ideals \( (65, x^2 + 268) \) and \( (65, x^2 - 268) \)
  is:
  \begin{align*}
    & (65^2, 65(x^2+268), 65(x^2-268), (x^2+268)(x^2-268)) \\
    =& (65^2, 65 * 2 * 268, 65(x^2-268), -71825)\\
    =& (65),
  \end{align*}
  but
\[  (65^2, (x^2+268)(x^2-268)) = (65^2, -71825)
  = (65^2, -17 * 65^2) = (65^2). \]
  If we add  some
  multiplies of \( 65 \) to the second generator,
  \[ (65, x^2 + 268)=(65, x^2+8), (65, x^2 - 268)
    = (65, x^2-8),\]
  then
\[  (65^2, (x^2+8)(x^2-8)) = (65^2, -65) = (65). \]
The main innovation of our paper is to find 
the desirable second generator so that ideal arithmetic can
be done efficiently.
\end{example}
In the work of Pohst-Zassenhaus, when the integer generator
\( N \) is a prime, a special type of two-element representations
was found so that multiplication can be done
in a straight forward way:
\[ (p, \beta_1) (p, \beta_2) = (p^2, \beta_1 \beta_2 ). \]
Their algorithm is not available when \( N \) is not a prime,
since it requires prime ideal factorization.
Our main technical contribution is to  derive such a form
for a composite \( N \) with unknown prime factorization,
 overcoming the inefficiency in
 \cite[Subsection 6.3.2]{Belabas04}.
 See Section~\ref{sec:dedekind} and
\ref{sec:powerandmulti}.

{\bf Stage 3} After the second stage,
the algorithm splits the input ideal uniformly:
\[ ({\cal N}, a(x), b(x)) = \prod_{i=1}^s (N_i, a_i (x)), \]
where all the \( N_i \)'s are pairwise co-prime to each other.
Multiplying them together into a single ideal
in two-element representation
can be accomplished in a standard way by Chinese Remainder Theorem,
which is the goal in the final stage.
 See Lemma~\ref{lem:relativeprime}.

\begin{theorem}
  (Main) There is an algorithm solving the
  two-element representation problem for a monogenic field
  of degree \( n \) in time \( poly(n, \log {\cal N}) \).
  When \( {\cal N} \) has no repeated prime factor, 
  the algorithm runs in time \( O(n^2 \log {\cal N} \omega({\cal N})) \),
  where \( \omega ({\cal N}) \) denotes the number of distinct prime
  factors of \( {\cal N} \).
\end{theorem}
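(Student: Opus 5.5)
The plan is to establish correctness and running time stage by stage, following the three-stage outline above, with $\mathcal N$ assumed free of small prime factors after the Pohst--Zassenhaus preprocessing. Throughout we work in $\Z[x]$ and may freely add $f(x)$ to an ideal, because the field is monogenic and the short exact sequence $0\to(f)\to\Z[x]\to O_K\to 0$ holds.

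\textbf{Stage 1: splitting by a Euclidean algorithm.} We run the Euclidean algorithm on $a(x),b(x),f(x)$ over $\Z/\mathcal N$.
\begin{itemize}
\item If every leading coefficient that appears is a unit, we obtain a monic $c(x)$ with $(\mathcal N,a,b,f)=(\mathcal N,c,f)$, and hence $(\mathcal N,c(\alpha))$ in $O_K$.
\item If a leading coefficient $u$ is a zero divisor, then $g=\gcd(u,\mathcal N)$ is a nontrivial factor. From $g$ and $\mathcal N/g$ we refine, by repeated gcds, to a coprime factorization $\mathcal N=\prod N_i^{e_i}$ in which the $N_i$ are pairwise coprime. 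Since the $N_i^{e_i}$ are coprime, the ideal splits as $\prod_i(N_i^{e_i},a,b)$, and we recurse on each factor.
\item The recursion depth is bounded by $\omega(\mathcal N)$. Each Euclidean run costs $O(n^2)$ operations in $\Z/\mathcal N$, giving the stated $O(n^2\log\mathcal N\,\omega(\mathcal N))$ count in the squarefree case (operation count with the arithmetic cost absorbed as in the statement).
\item When $\mathcal N=N^e$ is a perfect power, we instead run the algorithm modulo $N$. This yields the form (\ref{eq:onegennotN}), with $a_1\mid f \bmod N$.
\end{itemize}

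\textbf{Stage 2: the form (\ref{eq:onegennotN}).} This is the main obstacle. We invoke the generalized Dedekind criterion of Section~\ref{sec:dedekind} to choose lifts of $a_1$, and of the other factors of $f \bmod N$ arising from the $d_j$, so that the multiplicativity
\[ (N,\beta_1)(N,\beta_2)=(N^2,\beta_1\beta_2) \]
holds, as in Example~\ref{ex:1}. The criterion should say that if $f\equiv \prod g_i \pmod N$ with coprime monic $g_i$, then the lifts $g_i$ can be adjusted by multiples of $N$ so that $f=\prod g_i + N h$ with $h$ coprime to each $g_i$ modulo $N$. Monogenicity guarantees that such an adjustment exists. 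The ideals $(N,g_i(\alpha))$ then behave like unramified prime ideals above $p$, and powers and products can be computed via Section~\ref{sec:powerandmulti}.

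Whenever a required coprimality test modulo $N$ meets a zero divisor, we obtain a factor of $N$ and return to Stage 1 splitting. This is where the $\mathrm{poly}(n,\log\mathcal N)$ bound, rather than the sharper bound, comes from: the number of such restarts is at most $\log\mathcal N$, and each step is polynomial. The result of this stage is to write each factor as $\prod(N_i,a_i)$ with suitable exponents absorbed.

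\textbf{Stage 3: recombination.} We apply Lemma~\ref{lem:relativeprime} to combine the pairwise coprime factors $(N_i,a_i(\alpha))$ by the Chinese Remainder Theorem. Concretely, we take $M=\prod N_i$ and $c\equiv a_i \pmod{N_i}$ coefficientwise. The cost is $O(n)$ CRT operations per coefficient, which does not dominate.

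The key difficulty is the Stage 2 lifting, where the correct lift must be found without knowing the prime factorization of $N$. We expect the generalized Dedekind criterion to make this lifting a gcd computation modulo $N$, with every failure of that computation producing a splitting of $N$.
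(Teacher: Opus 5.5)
There is a genuine gap in Stage 2. Your Stages 1 and 3, and the squarefree count, agree with the paper: when $\mathcal N$ is squarefree every integer popped is squarefree, so $e=1$ throughout and Stage 2 is never entered. But Stage 2 is the real content of the theorem, and you never say how $J=(N^e,a_1(\alpha),NI)$ is turned into a product of two-element ideals. Multiplicativity $(N,\tilde a_1)\cdots(N,\tilde a_k)=(N^k,\prod_i\tilde a_i)$ only helps once you know \emph{which} factors $(N,\tilde a_j)$, with what multiplicities, make up $J$. That information sits in the $NI$ part. The $d_j$ are just ideal generators, not factors of $f \bmod N$, so ``lifting the factors arising from the $d_j$'' does not extract it. The paper extracts it by division. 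From $f=f_1N+\tilde aA$ and $a'\tilde a+f_1'f_1+Nt=1$ one gets $(1-Nt)N=\tilde a\,(a'N-f_1'A)$ in $O_K$, with $1-Nt$ invertible modulo any power of $N$. This gives an explicit $\bar a$ with $(N,\tilde a)(N,\bar a)=(N)$ (Lemma~\ref{lem:genN} and its corollary). Then $J/(N,\tilde a)=(N,\bar a)J/N$ is computable and renormalizes via Lemma~\ref{lem:polygcd} to the same shape (Lemma~\ref{lem:other}). One peels off factors until $a\equiv 1\pmod N$, at which point $J=O_K$ because $1+Ng$ is a unit modulo $N^e$. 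Each step removes norm $N^{\deg\tilde a}\ge N$ from an ideal containing $N^e$, so there are at most $en$ steps. Without this loop you have neither an algorithm for Stage 2 nor a bound on its cost. Your $\mathrm{poly}(n,\log\mathcal N)$ claim counts only factor-splitting restarts, so it is unsupported.

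The lifting condition you guess is also not the one needed. You state it for a \emph{coprime} factorization $f\equiv\prod g_i$ with $h$ coprime to every $g_i$, and treat the $(N,g_i(\alpha))$ as unramified-prime-like. But the factors to be peeled off need not be coprime to their cofactors. Take $\Z[x]/(x^2-m)$ with $m$ squarefree, $m\equiv 2,3\pmod 4$ and $N\mid m$. There one needs $(N,\alpha)$, whereas the coprime factor $x^2$ of $f\bmod N$ only gives $(N,\alpha^2)=(N)=(N,\alpha)^2$. The condition actually used (Theorem~\ref{thm:tech}) is just $f=\tilde f_1N+\tilde a\tilde A$ with $(\tilde a,\tilde f_1,N)=1$. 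It requires no coprimality between $\tilde a$ and $\tilde A$, nor among the $\tilde a_i$. Monogenicity enters through Dedekind (Lemma~\ref{lem:gended}), in two ways.
\begin{itemize}
\item When $a^2\mid f\bmod N$, it forces $s=1$ and $(f_1,a,N)=1$, so $\tilde a=a$ already works.
\item Otherwise the lift has to be \emph{constructed}. One splits $a=a_2a_3$ according to whether the factors divide $f_2\bmod N$, uses Dedekind to guarantee coprimality with $A$, and takes $\tilde a=a_2a_3+N$ or $(a_2+N)a_3$.
\end{itemize}
Your proposal leaves all of this as an expectation, so the core step of the theorem is missing.
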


Our algorithm shows that neither randomness nor prime
ideal factorization is necessary to find
two-element representation for an ideal.
In fact, finding a factor of \( {\cal N} \) only slows
down our algorithm.
Our algorithm is practical, since the basic operation is
arithmetic on polynomials rather than integral matrices
that is much more expensive.  Note that \( \omega({\cal N}) \)
is less than \(  \log {\cal N} \), and has average order
\( \log\log {\cal N} \)\cite{HW08}.
Thus when \( {\cal N} \) is square-free,
the complexity is comparable to 
randomized algorithm. Our algorithm
  works for all ideals in monogenic fields, covering
  cyclotomic fields---the most important case
  for ideal-based cryptography.
It also works for non-monogenic fields if the norm of I is
relatively prime to the index of $O_k$
over $\Z[x]/(f)$. Moreover, if the algorithm fails,
it will provide a certificate that \( f(x) \)
is not monogenic.

\section{Mathematical Preparation}

It is well-known \cite{Cohen93} that
a prime ideal containing \( p \) has two-element
representation \( (p, t(x)) \) if
\( p \) does not divide the index of \( [O_K:\Z[x]/(f)] \).
 Here \( t(x) \) is an irreducible factor of \( f(x) \)
over \( \F_p \).

In the section, we present a few facts about ideals
in number fields. They are known in different forms in literature.
We include some proofs here for completeness.

\subsection{The minimal positive integer in an ideal}

Given an ideal \( I \) in \( O_K \), we denote the
minimal positive integer in the ideal by \( MPI(I) \).
It divides the norm and can be computed by
Hermite Normal Form of the lattice corresponding to
\( I \). If the number field is monogenic and
the ideal is principal \( (a(x)) \), it can
be also computed by more efficient Euclidean algorithm. 
First we find the inverse of \( a(\alpha) \) in the field,
by running the Euclidean algorithm on \( f(x) \) and \( a(x) \)
in \( \Q[x] \). The inverse must be in the form
\( b(\alpha) \) for some polynomial in \( b(x) \in \Q[x] \).
Let the number \( i\) be the smallest positive integer
such that \( i b(x) \in \Z[x] \).
It can be shown that \( i =  MPI (a(\alpha))  \).
If an ideal is generated by a few elements \( \alpha_1,
\alpha_2, \cdots\), we can set \({\cal N} \) to be
\( gcd(MPI((\alpha_1)), MPI((\alpha_1)), \cdots) \).

\subsection{When integer generators are relatively prime}

We now show that factorization of an integer
generator of an ideal leads to factorization of the ideal.

\begin{lemma} \label{lem:factorideal}
  Let \( I \) be an ideal in \( \Z[x] \). Suppose that
  an integer \( {\cal N} \) factors as \( {\cal N} = N_1 N_2 \),
  with \( (N_1, N_2) = 1 \). Then the ideal factors as 
  \[ {\cal N} \Z[x] + I = (N_1 \Z[x]+ I) (N_2 \Z[x] + I)  \] 
\end{lemma}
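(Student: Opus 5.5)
We prove the equality by double inclusion, using only that $\Z[x]$ is a commutative ring containing $\Z$ and Bezout's identity for $N_1, N_2$. No structure of $I$ beyond being an ideal is needed. Write $J_1 = N_1\Z[x] + I$ and $J_2 = N_2\Z[x] + I$.

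For the inclusion $J_1 J_2 \subseteq {\cal N}\Z[x] + I$, we expand the product. It is generated by products of one generator of $J_1$ with one generator of $J_2$, giving four types of terms.
\begin{itemize}
\item The product $N_1 g \cdot N_2 h = {\cal N} gh$ lies in ${\cal N}\Z[x]$.
\item The products $N_1 g \cdot i$, $i \cdot N_2 h$ and $i \cdot i'$, with $i, i' \in I$, all lie in $I$ because $I$ is an ideal.
\end{itemize}
Hence $J_1 J_2 \subseteq {\cal N}\Z[x] + I$.

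For the reverse inclusion, coprimality gives integers $u, v$ with $uN_1 + vN_2 = 1$. First, ${\cal N} = N_1 N_2 \in J_1 J_2$ directly, so ${\cal N}\Z[x] \subseteq J_1 J_2$. Next, take any $i \in I$ and write
\[ i = i\,(uN_1 + vN_2) = u\,(N_1 \cdot i) + v\,(i \cdot N_2). \]
Here $N_1 \in J_1$ and $i \in J_2$, so $N_1 i \in J_1 J_2$. Symmetrically $i \in J_1$ and $N_2 \in J_2$, so $i N_2 \in J_1 J_2$. Therefore $i \in J_1 J_2$, which gives $I \subseteq J_1 J_2$. Combining the two parts, ${\cal N}\Z[x] + I \subseteq J_1 J_2$.

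The argument has no real obstacle. The only key step is the Bezout trick that recovers $I$ inside the product, and it is exactly where the hypothesis $(N_1,N_2)=1$ is used. The same argument works verbatim in $O_K$, or in any commutative ring, so the factorization passes to the image ideals in $O_K$.
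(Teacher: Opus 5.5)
Your proof is correct and takes the same route as the paper. You expand the product to obtain $(N_1\Z[x]+I)(N_2\Z[x]+I) \subseteq {\cal N}\Z[x]+I$, and you recover each element of $I$ inside the product using coprimality; the paper compresses that Bezout step into the remark $f(x) \in f(x)N_1\Z[x] + f(x)N_2\Z[x]$, which you make explicit with $uN_1+vN_2=1$.
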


\begin{proof}
  Since 
  \[  (N_1 \Z[x]+ I) (N_2 \Z[x] + I) = N_1 N_2\Z[x]
  + N_1 I \Z[x] + N_2 I \Z[x] + I^2, \]
it is clear that \(  RHS \subseteq LHS \).
   To see that
  \( LHS \subseteq RHS \), we observe
  that for any \( f(x) \in I \),
  \( f(x) \in f(x) N_1 \Z[x] + f(x) N_2 \Z[x] \). 
\end{proof}

\begin{lemma}
\label{lem:relativeprime}
  Assume that two positive integers \( N_1, N_2 \)
  are relatively prime. Let
  \[ M_1 =  N_1^{-1} \pmod{N_2 }\ and
  \ M_2 =  N_2^{-1} \pmod{N_1 }.\]
Then two ideals in \( \Z[x] \) can be multiplied
in polynomial time as
  \[ (N_1, f_1(x)) (N_2, f_2(x)) = (N_1 N_2, g(x) ), \]
  where \( f_1(x), f_2(x) \) are in \( \Z[x] \), and 
  \( g(x) = N_2 M_2 f_1(x) + N_1 M_1 f_2(x) \).
\end{lemma}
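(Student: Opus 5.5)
We prove the identity of ideals in \( \Z[x] \) by showing both inclusions, using only the congruences \( N_2 M_2 \equiv 1 \pmod{N_1} \) and \( N_1 M_1 \equiv 1 \pmod{N_2} \). Write \( J = (N_1, f_1(x))(N_2, f_2(x)) = (N_1N_2, N_1 f_2, N_2 f_1, f_1 f_2) \). Reducing \( g \) gives \( g \equiv f_1 \pmod{N_1} \) and \( g \equiv f_2 \pmod{N_2} \). Hence \( (N_1, g) = (N_1, f_1) \) and \( (N_2, g) = (N_2, f_2) \) as ideals of \( \Z[x] \).

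For \( (N_1N_2, g) \subseteq J \), it suffices to check \( g \in J \), which is not immediate. The cleanest route is Lemma~\ref{lem:factorideal} applied with \( I = (g) \) and \( {\cal N} = N_1N_2 \). It yields
\[ N_1N_2\Z[x] + g\Z[x] = (N_1\Z[x] + g\Z[x])(N_2\Z[x]+g\Z[x]) = (N_1, f_1)(N_2, f_2) = J. \]
The proof of Lemma~\ref{lem:factorideal} only uses that \( I \) is an ideal and \( \gcd(N_1,N_2)=1 \), so it applies directly. This gives equality at once, and both inclusions follow.

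If one prefers to avoid that lemma, the direct check goes as follows. For the inclusion \( J \subseteq (N_1N_2, g) \), note first that \( N_1 g = N_1N_2M_2 f_1 + N_1^2 M_1 f_2 \). Modulo \( N_1N_2 \) this is \( N_1 f_2 \cdot (N_1 M_1) \), and \( N_1 M_1 \equiv 1 \pmod{N_2} \). Therefore \( N_1 g \equiv N_1 f_2 \pmod{N_1N_2} \), which places \( N_1 f_2 \) in the ideal; symmetrically \( N_2 f_1 \) lies in it as well. For \( f_1 f_2 \), write \( f_1 = g + N_1 u \) and \( f_2 = g + N_2 v \). Then \( f_1 f_2 = g^2 + g(\cdots) + N_1N_2 uv \) lies in the ideal.

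The main obstacle is the reverse inclusion \( g \in J \) in this direct approach. We handle it by writing \( 1 = N_2M_2 + N_1 M_1 - N_1N_2 t \) for a suitable integer \( t \), which exists because \( N_2 M_2 + N_1 M_1 \equiv 1 \) modulo both \( N_1 \) and \( N_2 \). Multiplying by \( g \) expresses \( g \) through \( N_2 M_2 g \) and \( N_1 M_1 g \), each of which lies in \( J \). For example, \( N_2 g \equiv N_2 f_1 \pmod{N_1 N_2} \) by the computation above, so \( N_2 g \in J \), and similarly \( N_1 g \in J \).

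Polynomial time is clear, since \( M_1 \) and \( M_2 \) come from the extended Euclidean algorithm and \( g \) takes \( O(n) \) integer operations.
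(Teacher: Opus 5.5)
Your main argument is exactly the paper's proof and is correct: the CRT congruences \( g \equiv f_1 \pmod{N_1} \) and \( g \equiv f_2 \pmod{N_2} \) give \( (N_i, g) = (N_i, f_i) \), and then Lemma~\ref{lem:factorideal} with \( I = (g) \) finishes. Your direct check is also valid, but \( g \in J \) is not an obstacle at all, since \( g = M_2 (N_2 f_1) + M_1 (N_1 f_2) \) is already a combination of the generators \( N_2 f_1 \) and \( N_1 f_2 \) of \( J \).
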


\begin{proof}
Essentially we find a polynomial \( g(x) \) so that
\( g(x) \pmod{N_1} = f_1(x) \)
and \( g(x) \pmod{N_2} = f_2(x) \), using the
Chinese Remainder Theorem.
Then we apply Lemma~\ref{lem:factorideal}.
\end{proof}

\section{An Euclidean algorithm for integral polynomials}

First we rephrase the Euclidean algorithm for polynomials
over \( \Z/N\Z \) as ideal normalization in \( \Z [x] \).

\begin{lemma}\label{lem:polygcd} 
  Let \( N \) be a positive integer, %
  and \( a(x), b(x) \) are two integral polynomials,
  neither of which is a multiple of \( N \).
  There is a polynomial time algorithm that either finds
  a nontrivial factor of \( N \), or two polynomials
  \( c(x) \) and \( d(x ) \) in \( \Z[x] \)  such that
  \[ a(x)\Z[x] + b(x)\Z[x] = c(x)\Z[x] + N d(x)\Z[x].  \]
  In addition, in this case the algorithm also find three polynomials
  \( a'(x), b'(x), t(x) \)  such that
  \[ a(x) a'(x) + b(x) b'(x) + N t(x) = c(x).  \]
\end{lemma}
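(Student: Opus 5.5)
The plan is to run the Euclidean algorithm on \( a(x), b(x) \) exactly in \( \Z[x] \), but to choose every quotient by working modulo \( N \). Each step will then be a unimodular change of generators, so the ideal \( a(x)\Z[x]+b(x)\Z[x] \) is preserved exactly and not merely modulo \( N \). Since every degree is measured modulo \( N \), the algorithm stops once one generator becomes a multiple of \( N \); that generator is then \( N d(x) \), and the other one is \( c(x) \).

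\textbf{Euclidean step.} For \( u \in \Z[x] \), write \( \deg_N u \) for the degree of \( u \bmod N \); it is defined whenever \( u \not\equiv 0 \pmod N \). Start from the pair \( (u,v)=(a,b) \) and order it so that \( \deg_N u \ge \deg_N v \). Let \( \ell \) be the leading coefficient of \( v \bmod N \), with \( 0<\ell<N \).
\begin{itemize}
\item Compute \( g=\gcd(\ell,N) \). If \( g>1 \), then \( g \) is a nontrivial factor of \( N \): it is not \( N \) because \( \ell \not\equiv 0 \pmod N \). Output \( g \) and stop.
\item Otherwise \( \ell \) is a unit modulo \( N \). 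Long division of \( u \bmod N \) by \( v \bmod N \) over \( \Z/N \) gives a quotient \( q(x) \) with coefficients in \( [0,N) \) such that \( \deg_N(u-qv)<\deg_N v \), or \( u-qv\equiv 0 \pmod N \).
\item Replace \( (u,v) \) by \( (v,\,u-qv) \). The transformation matrix \( \begin{pmatrix}0&1\\1&-q\end{pmatrix} \) is invertible over \( \Z[x] \), so \( u\Z[x]+v\Z[x] \) does not change.
\end{itemize}

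\textbf{Termination and the Bezout relation.} The quantity \( \deg_N \) of the second component strictly decreases. Hence after at most \( \deg a+\deg b+1 \) steps either a factor of \( N \) has been found, or the new remainder \( w=u-qv \) satisfies \( w\equiv 0\pmod N \). In the second case set \( c=v \) and \( w=Nd \), which gives the claimed identity \( a\Z[x]+b\Z[x]=c\Z[x]+Nd\Z[x] \). The initial degenerate cases are excluded by the hypothesis that neither \( a \) nor \( b \) is a multiple of \( N \). Along the way, the product of the transformation matrices gives the extended-Euclid cofactors \( c=aA+bB \) exactly. Reducing \( A \) and \( B \) modulo \( N \) to \( a',b' \) gives
\[ c-aa'-bb' = a(A-a')+b(B-b') \in N\Z[x] , \]
and dividing this by \( N \) defines \( t \) with \( aa'+bb'+Nt=c \).

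\textbf{Main obstacle: coefficient growth.} Polynomial time is the delicate point, because the exact remainders \( u-qv \) are never reduced modulo \( N \). I would control this through the cofactor matrix. Every intermediate generator has the form \( aA+bB \), where the matrix entries are polynomials of degree at most \( \deg a+\deg b \). I would first try to bound the integer coefficients of these entries. Failing that, I would reduce \( A \) and \( B \) modulo \( N \) and recompute the generators as \( aA+bB \). This changes each generator only by an element of \( N\cdot(a,b) \), and I would need to check carefully that the final identity still holds; that check is exactly where the correction term \( Nt \) in the statement comes in. With the entries bounded in this way, all coefficients stay of size polynomial in \( \log N \), the input size and the degree. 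Each step is one polynomial division over \( \Z/N \) plus a gcd computation, so the whole procedure runs in polynomial time.
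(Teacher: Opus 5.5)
Your proposal is correct and is essentially the paper's own proof. Both run Euclid exactly in $\Z[x]$ with quotients computed over $\Z/N$, abort when a leading coefficient is a non-unit, and stop once a remainder vanishes modulo $N$; the paper also strips off perfect powers first, which this lemma does not need, and says nothing about cofactors or coefficient size.

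The growth you flag is harmless, so your first option already works. Quotient coefficients lie in $[0,N)$, intermediate degrees stay at most $\deg a+\deg b$, and there are at most $\deg a+\deg b+1$ steps, so each step adds only $O(\log N+\log(\deg a+\deg b+1))$ bits. By contrast, you should drop the fallback of reducing cofactors modulo $N$ mid-run: perturbing generators by elements of $N\,(a,b)$ need not preserve $a\Z[x]+b\Z[x]$.
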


\begin{proof}
  If \( N \) is a perfect power, we can find its non-trivial factor
  in polynomial time. Now assume that \( N \) is not a perfect power.

  If any coefficients of \( a(x), b(x) \) is a
  non-trivial zero divisor in \( \Z/N \), we can find
  a non-trivial factor of \( N \).
  Otherwise we separate the terms of \( a(x), b(x) \):
  \[ a(x)=a_1 (x) + N a_2(x), b(x) = b_1(x) + N b_2(x),  \]
  where the leading coefficients of \( a_1(x), b_1(x) \)
  are in \( ( \Z/N )^* \). W.l.o.g, assume that
  \( \deg(a_1(x)) \geq \deg (b_1(x)). \)
  Then in \( Z/N[x] \), we can divide \( a_1(x) \)
  by \( b_1(x) \), and let \( q(x) \in \Z[x] \) be the quotient.
  It implies that there exists a polynomial \( r(x) \in \Z[x] \)
  such that 
\[ a(x) = q(x) b(x) + r(x), \]
and \( r(x) = r_1(x) + N r_2(x)\),
\( deg(r_1 (x)) < deg(b_1 (x)) \).
As ideals in \( \Z[x] \), we have
\[ a(x) \Z[x] + b(x) \Z[x] = b(x) \Z[x] + r(x) \Z [x],  \]
since \( r(x) \) is in the LHS ideal
and \( a(x) \) is in the RHS ideal.

At this point, if any coefficient of \( r(x) \)
is a non-trivial zero divisor of \( \Z/N \),
then we find a non-trivial factor of \( N \).
Otherwise we divide \( b(x) \) by \( r(x) \),
and continue an Euclidean-style algorithm,
until the remainder polynomial is \( 0 \pmod{N} \).
This concludes the proof.
\end{proof}

The algorithm is basically the Euclidean algorithm in
\( \Z/N[x] \), which is implemented in many software
packages such as Sagemath. If the algorithm terminates without
encountering a non-trivial zero divisor in \( \Z/N \),
then we solve the two-element
representation problem for the ideal
\( (N, a(\alpha), b(\alpha)) \):
\[ N O_k + a(x) O_k + b(x) O_k = N O_K + c(x) O_k.  \]
What if a non-trivial zero divisor in \( \Z/N \) is found?
We also need to take care of a degenerate case,
when \( N \) is  a perfect power.
The following theorem shows that  we can
always factor and normalize the input ideal:

\begin{theorem}
  There is a polynomial time algorithm that,
  given two integral polynomials \( a(x ) \) and
  \( b(x ) \), a positive integer \( N  \)
  that is not a perfect power, and a positive integer \( e \),
  factors the ideal \( (N^e, a(x), b(x)) \subseteq \Z[x] \)
  into a product as
  \[ (N^e, a(x), b(x)) = \prod_{i} (N_i^{e_i}, a_i(x), N_i d_i (x))  \]
  such that none of \( N_i \)'s is a perfect power,
  and for any \( i \not= j \), \( gcd(N_i, N_j) = 1\).
\end{theorem}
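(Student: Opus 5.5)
Throughout, we run the Euclidean algorithm of Lemma~\ref{lem:polygcd} modulo the base $N$ (not modulo $N^e$) and split the integer part whenever a zero divisor appears. The main tool is Lemma~\ref{lem:factorideal}. If $N = N_1N_2$ with $\gcd(N_1,N_2)=1$, then $N^e = N_1^eN_2^e$ with coprime factors, and
\[
(N^e, a(x), b(x)) = (N_1^e, a(x), b(x))\,(N_2^e, a(x), b(x)).
\]
The algorithm is recursive and maintains a list of triples $(M, e, \text{ideal})$ with pairwise coprime bases $M$, none of which is a perfect power. Initially the list is $\{(N, e, (N^e,a,b))\}$.

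For a current factor $(M^e, a(x), b(x))$ we first reduce $a$ and $b$ modulo $M^e$. Then we run the procedure of Lemma~\ref{lem:polygcd} with modulus $M$. There are two outcomes.

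\emph{Success.} We obtain $c, d$ with $a\Z[x]+b\Z[x] = c\Z[x] + M d\Z[x]$. Adding $M^e$ gives
\[
(M^e, a, b) = (M^e, c(x), M d(x)),
\]
which is exactly the required form with $a_i = c$ and $d_i = d$. If $a$ or $b$ is already $0 \bmod M$, the same form is obtained trivially. If both are, the ideal is $(M^e, M a', M b')$. In that case we write $a = c$ with $d = b/M$ if $a \not\equiv 0$; if both vanish modulo $M$, we may take $a_i = 0$, since the statement only requires the shape $(M^e, a_i, M d_i)$. Here $a_i = 0$ is allowed, or we take $a_i = a$ and absorb $b$ into $d_i$.

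\emph{Failure.} We find a nontrivial divisor $g$ of $M$, either as a zero-divisor coefficient or because a remainder's leading coefficient shares a factor with $M$. From $g$ and $M/g$ we compute a coprime factorization of $M$ into pairwise coprime parts. We use the standard gcd-refinement ("coprime base") algorithm, which produces pairwise coprime $m_1,\dots,m_k$ with $M=\prod m_j^{f_j}$ in polynomial time. Each $m_j$ is further replaced by its perfect-power root, which is detectable in polynomial time, adjusting the exponent accordingly. Then $M^e = \prod m_j^{e f_j}$, and applying Lemma~\ref{lem:factorideal} repeatedly splits the ideal into $(m_j^{ef_j}, a, b)$. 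We recurse on each piece.

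The main obstacle is termination and the polynomial bound, and I would handle it with a potential argument. Every failure strictly refines the coprime base of $N$: each new base element is a proper divisor of a previous one, and the product of distinct base elements divides $N$. The number of base elements is therefore at most $\log_2 N$, and each element can be refined at most $\log_2 N$ times. Concretely, the potential $\sum_j (\log_2 N - \log_2 m_j)$ over at most $\log N$ parts is bounded by $\log^2 N$ and increases on every failure. So there are polynomially many failures, each followed by a polynomial-time Euclidean run. The sizes of exponents and coefficients stay bounded because everything is reduced modulo $M^e \le N^e$. Finally, pairwise coprimality of the output $N_i$ holds by construction of the coprime base. None of the $N_i$ is a perfect power because of the root extraction step. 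Multiplying the pieces back recovers the original ideal by Lemma~\ref{lem:factorideal}.
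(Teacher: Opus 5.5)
Your proposal is correct and follows the same route as the paper: run the Euclidean algorithm of Lemma~\ref{lem:polygcd} modulo the base $N$ (not $N^e$). On success you read off $(N^e, c(x), N d(x))$; on failure you split $N$ into coprime parts via Lemma~\ref{lem:factorideal} and recurse. You also supply details the paper only gestures at, namely the coprime-base refinement, perfect-power root extraction, the case where $a$ or $b$ is already divisible by $M$, and a polynomial bound on the number of splits, though in the doubly-divisible case only your second option ($a_i = a$, $d_i = b/M$) yields the required shape, not $a_i = 0$.
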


\begin{proof}
  Run the algorithm from Lemma~\ref{lem:polygcd}
  on \( a(x), b(x) \) and \( N \) ( rather than \( N^e \) ).
  If we find \( c(x) \) and \( d(x) \) such that
  \( a(x)\Z[x] + b(x) \Z[x] = c(x)\Z[x] + N d(x) \Z[x], \)
  then
  \[  (N^e, a(x), b(x)) = (N^e, c(x), N d(x)). \]
  Otherwise we find a factorization of \( N \), then
  \[ (N^e, a(x), b(x)) = (N_1^{e_1}, a(x), b(x))
      (N_2^{e_2}, a(x), b(x)).\] 
    Here \( N_1, N_2 \)  are co-prime.
    We then recursively work on each of the factors,
    ultimately obtaining the desired factorization.
\end{proof}

We can add \( f(x) \) into the ideal,
run the
algorithm from Lemma~\ref{lem:polygcd} on \( a_i(x) \),
\( f(x) \) and \( N \),
and derive a corollary:

\begin{corollary}
\label{cor:gcd}
  There is a polynomial time algorithm that,
  given two integral elements \( a(\alpha ) \) and
  \( b(\alpha) \) in \( O_K \), a positive integer \( N  \)
  that is not a perfect power, and a positive integer \( e \),
  factors
  the ideal \( (N^e, a(\alpha), b(\alpha)) \subseteq O_K \)
  into a product as
  \begin{equation}
\label{eq:5}
 (N^e, a(\alpha), b(\alpha)) =
 \prod_{i} (N_i^{e_i}, a_{ i }(\alpha),
    N_i I_i)  
  \end{equation}
  such that none of \( N_i \)'s is a perfect power;
  \( I_i = (d_{i1} (\alpha), d_{i2} (\alpha)) \)
  is an ideal in \( O_K \); and
  for any \( i \not= j \), \( gcd(N_i, N_j) = 1\);
  and \( a_{i}(x) \) is a polynomial dividing \( f(x) \)
  in \( \Z/N_i[x] \).
\end{corollary}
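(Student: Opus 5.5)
We plan to derive the corollary from the preceding theorem by adding \( f(x) \) to the ideal and running one more normalization step modulo each \( N_i \). The starting observation is that the map \( \Z[x]\to O_K \), \( x\mapsto\alpha \), sends \( (N^e, a(x), b(x)) \) and \( (N^e, a(x), b(x), f(x)) \) to the same ideal of \( O_K \). This map is a ring homomorphism, so the image of a product of ideals of \( \Z[x] \) generates the product of the images in \( O_K \). Hence any factorization obtained in \( \Z[x] \) transfers to one in \( O_K \).

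First we apply the preceding theorem to \( (N^e, a(x), b(x)) \). This yields \( \prod_i (N_i^{e_i}, a_i(x), N_i d_i(x)) \), where the \( N_i \) are pairwise coprime and none is a perfect power. Next, for each factor we use \( f(\alpha)=0 \) to write the image as \( (N_i^{e_i}, a_i(\alpha), f(\alpha), N_i d_i(\alpha)) \). We then run the algorithm of Lemma~\ref{lem:polygcd} on \( a_i(x), f(x) \) modulo \( N_i \). There are two outcomes.

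If it succeeds, we obtain \( c_i(x), d_i'(x) \) with \( a_i\Z[x]+f\Z[x] = c_i\Z[x]+N_i d_i'\Z[x] \). The factor then becomes \( (N_i^{e_i}, c_i(\alpha), N_i d_i(\alpha), N_i d_i'(\alpha)) \), which has the required shape with \( I_i = (d_i(\alpha), d_i'(\alpha)) \). Moreover, \( c_i \) is the output of the Euclidean algorithm in \( \Z/N_i[x] \) on \( a_i \) and \( f \). Since \( f \) is monic, every leading coefficient along the way is a unit, so \( c_i \) generates the ideal \( (a_i, f) \) in \( \Z/N_i[x] \). In particular, \( c_i \) divides \( f \) there, up to multiplying by a unit so that it becomes monic.

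If it fails, we get a nontrivial factor of \( N_i \). We split it into coprime pieces that are not perfect powers, as in the theorem, using Lemma~\ref{lem:factorideal} (the \( O_K \) version follows by the image argument above). We then recurse. Each split strictly increases the number of coprime pieces of \( N \), and that number is at most \( \log N \), so the recursion terminates after polynomially many steps.

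The main obstacle is the claim that \( c_i \) divides \( f \) in \( \Z/N_i[x] \) when \( \Z/N_i \) is not a field. Divisibility does not follow automatically from the ideal identity over a non-domain. We would argue it directly. The Euclidean algorithm expresses \( f \) as a \( \Z/N_i[x] \)-combination of the last nonzero remainder \( c_i \), and every division step is valid because each divisor has unit leading coefficient. Tracing the remainders backward then gives \( c_i \mid f \) in \( \Z/N_i[x] \), exactly as in the field case. A secondary point is to check that the integer generator stays \( N_i^{e_i} \) and is not altered by adding \( f \). This holds because the added generators \( f \) and \( N_i d_i' \) all lie in the original ideal's image.
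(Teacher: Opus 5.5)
Your proposal is correct and follows the paper's route: apply the preceding theorem, adjoin $f(x)$ (harmless since $f(\alpha)=0$), and run the algorithm of Lemma~\ref{lem:polygcd} on $a_i(x), f(x)$ modulo $N_i$. Your handling of the failure branch (coprime splitting and recursion), the transfer from $\Z[x]$ to $O_K$, and the termination bound fill in details that the paper's one-line proof leaves implicit.

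The step you flag as the main obstacle is actually immediate. Reducing $a_i\Z[x]+f\Z[x]=c_i\Z[x]+N_i d_i'\Z[x]$ modulo $N_i$ gives $f\in c_i\,(\Z/N_i)[x]$, which is exactly $c_i\mid f$ in any commutative ring, so no backward tracing through the remainders is needed.
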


\begin{algorithm}[h]\label{alg:stage1}
  \caption{Sketch of the algorithm in the first stage}
    \KwIn{ \( f(x) \) monogenic, an integer \( {\cal N}\in \Z \)
    and two elements \( a(x), b(x) \) in \( \Z[x] \).}
  \KwOut{ Two stacks: R1 and R2.  }
  \tcc{Elements in R1 are pairs
    of form \( (N_i, a_i(\alpha)) \), elements in R2 have form
    \( ( N_i^{e_i}, a_i(\alpha),
    N_i d_{i1}(\alpha), N_i d_{i2}(\alpha),\cdots ) \),
    and all \( N_i \) are relatively co-prime.
    The product of all ideals in R1 and R2 is equal to
  \( ({\cal N}, a(\alpha), b(\alpha)) \)}
  Initiate an empty stack S\;
  Push \( {\cal N} \) into S\;
  Initiate an empty stack R1\;
  Initiate an empty stack R2\;
  \While{S is not empty}{
    \Try{}{
      Pop an integer  from S, and write it as \(N^e\) so
      that \( N \) is not a perfect power\; 
      \tcc{\( N^e \) is of course a perfect power if \( e \geq 2 \)}
      Run the algorithm from Lemma~\ref{lem:polygcd}
      on \( a(x), b(x)\) and \( N  \) to find
      \( c(x), d(x) \) so that over \( \Z[x] \),
      \( a(x) \Z[x] + b(x) \Z[x] = c(x)\Z[x] + N d(x)\Z[x]  \)\;
      \uIf{\( e = 1 \)}
        {Push \( (N^e, c(\alpha)) \) into R1\;
        }
      \Else{Run the algorithm  from Lemma~\ref{lem:polygcd}
          on \(c(x)\), \(f(x)\) and \( N \) to find
      \( a_1(x), d_2(x) \) so that over \( \Z[x] \),
      \( c(x) \Z[x] + f(x) \Z[x] = a_1 (x)\Z[x] + N d_2(x)\Z[x]  \)\;
          push \( (N^e, a_1 (\alpha), N d(\alpha), N d_2 (\alpha)) \) into R2; 
        }
    }
    \Catch{\( N \) is factored}
    {\For{each factor \( N_i \) with
          \( gcd(N, N/N_i) =1 \)}{
      Push \( N_i^{e}\) into the stack S\; }
    }
    }
\end{algorithm}

The first stage of the algorithm will factor the input
ideal as in Equation~\ref{eq:5}, and is described in pseudo-code
in Algorithm~\ref{alg:stage1}. 
Note that we may write \( a_{i} (x) \), in many ways,
as a monic polynomial dividing \( f(x) \) over \( \Z/N_i \), plus
a multiple of \( N_i \) in \( \Z[x] \).
This flexibility plays an essential
role in later stages of the algorithm.
For details see Theorem~\ref{thm:tech}. 
For a factor in the RHS of Equation~\ref{eq:5} with \( e_i =1 \),
it is already in the two-element form.
While other  factors in the RHS with \( e_i > 1 \)
is not in two-element representation,
we can immediately read off a factor \( (N_i, a_{i}(\alpha)) \)
with two generators.
If \( a_{i}(\alpha) = 1\pmod{N} \), then
\(  (N_i^{e_i}, a_{ i }(\alpha), N_i I_i)\) is
    the entire ring \( O_K \), according to the following lemma.

\begin{lemma}
  If \( A \in ( \Z/N )^* \), then for any \( g(x) \),
  \( A+N g(x) \) is a unit in \( \Z/N^s [x] \) for
  any positive integer \( s \).
\end{lemma}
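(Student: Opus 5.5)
The plan is to write down an explicit inverse. Set \( u = A + N g(x) \). Since \( A \in (\Z/N)^* \), pick an integer \( B \) with \( AB \equiv 1 \pmod{N} \), and write \( AB = 1 + N k \) for some integer \( k \). Multiplying \( u \) by \( B \) gives
\[ B u = 1 + N k + N B g(x) = 1 + N h(x), \qquad h(x) = k + B g(x) \in \Z[x]. \]
It therefore suffices to show that every element of the form \( 1 + N h(x) \) is a unit in \( \Z/N^s[x] \). The inverse of \( u \) is then \( B \) times the inverse of \( 1 + N h(x) \).

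For that reduced claim, the key observation is that \( N h(x) \) is nilpotent in \( \Z/N^s[x] \), because \( (N h(x))^s = N^s h(x)^s \equiv 0 \). The truncated geometric series then gives the inverse:
\[ (1 + N h(x)) \sum_{j=0}^{s-1} (-N h(x))^j = 1 - (-N h(x))^s = 1 \pmod{N^s}. \]
This identity holds in \( \Z[x] \) up to the term \( \pm N^s h(x)^s \), so it holds in \( \Z/N^s[x] \). Hence
\[ u^{-1} = B \sum_{j=0}^{s-1} (-N h(x))^j \]
in \( \Z/N^s[x] \).

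There is no real obstacle in this argument. The only point needing care is that \( A \) is invertible modulo \( N \) but not necessarily modulo \( N^s \) as written; the reduction to \( 1 + N h(x) \) above handles this. Alternatively, one can note that \( \gcd(A, N^s) = 1 \), so \( A \) is already invertible modulo \( N^s \), and then apply the same series argument to \( 1 + A^{-1} N g(x) \).

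If the intended application is in \( O_K \) modulo \( N^s \), meaning in \( \Z[x]/(N^s, f(x)) \), the same computation applies verbatim after reducing modulo \( f \). This justifies the preceding remark that \( (N_i^{e_i}, a_i(\alpha), N_i I_i) = O_K \) whenever \( a_i(\alpha) \equiv 1 \pmod{N_i} \): the ideal then contains both \( N_i^{e_i} \) and a unit modulo \( N_i^{e_i} \), so it contains \( 1 \).
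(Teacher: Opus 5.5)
Your proposal is correct and follows the paper's argument: choose \( B \) with \( AB = 1 + NC \), reduce \( B(A+Ng(x)) \) to \( 1 + N(Bg(x)+C) \), and invert that with the truncated geometric series modulo \( N^s \). The only difference is that you explicitly verify the identity \( (1+Nh)\sum_{j=0}^{s-1}(-Nh)^j = 1-(-Nh)^s \), which the paper's fraction manipulation leaves implicit.
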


\begin{proof}
  Let \( B \in \Z \) be an inverse of \( A \), namely,
  there exists an \( C\in \Z \) such that \( A B = 1 + N C \).
  We have
  \begin{align*}
 1/(A + N g(x)) =& B/ ( A B + N B g(x))\\
    =& B/ ( 1 + N (B g(x) + C) )\\
    =& B ( 1 - N (B g(x) + C) + (N (B g(x) + C))^2 \\
    & \cdots + (- N (B g(x) +C))^{s-1} ) \pmod{N^s }
  \end{align*}
\end{proof}

If \( a_{i}(\alpha) \not= 1 \pmod{N_i} \),
we need to compute
\( (N_i^{e_i}, a_{ i }(\alpha), N_i I_i)/(N_i, a_{i}(\alpha)) \).
In the next two sections, we show how to
find a proper member in  \( a_{i} (x) + N_i \Z[x] \) 
so that ideal division, multiplication and powering
behave nicely.

\section{Generalized Dedekind's criterion}
\label{sec:dedekind}

Let \( f(x) \) be monogenic.
Let \( a_1(x ) \in \Z[x] \) be a monic factor
of \( f(x) \) over \( \Z/N \), where \( N \) is not a perfect power.
We can write
\begin{equation}\label{eq:1} 
 f(x) = f_1 (x) N^s + a_1^e (x) A(x),  
\end{equation}
such that \( f_1(x) \not= 0 \pmod{N} \), 
and \( a_1(x) \) does not divide \( A(x)\) (which is also monic)
over \(\Z/N \).

\begin{lemma}
\label{lem:gended}
If \( e\geq 2 \), we must have \( s =1 \).
Moreover, if running the algorithm from Lemma~\ref{lem:polygcd}
on \( f_1(x), a_1(x) \)
and \( N \) does not throw an exception of
finding a non-trivial factor of \( N \),
  then \( (f_1(x), a_1(x), N) =1 \) in \( \Z[x] \).
\end{lemma}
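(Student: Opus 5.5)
The plan is to prove both parts by contradiction, using only that $\Z[\alpha]=\Z[x]/(f)$ is integrally closed: we exhibit an element of $K$ that is integral over $\Z$ but does not lie in $\Z[\alpha]$. This is Dedekind's criterion, made explicit for the factorization (\ref{eq:1}).

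\emph{Step 1: $e\ge 2$ forces $s=1$.} Suppose $s\ge 2$, and pick any prime $p\mid N$ (no factorization of $N$ is needed, only the existence of $p$). Consider
\[ u=\frac{a_1(\alpha)^{e-1}A(\alpha)}{p}\in K. \]
Since $e\ge 2$, we can compute
\[ u^2=\frac{a_1(\alpha)^{e}A(\alpha)\cdot a_1(\alpha)^{e-2}A(\alpha)}{p^2}=-\frac{N^s}{p^2}\,f_1(\alpha)\,a_1(\alpha)^{e-2}A(\alpha), \]
using $a_1^eA=f-f_1N^s$ and $f(\alpha)=0$. Because $p^2\mid N^s$, the element $u^2$ lies in $\Z[\alpha]$, so $u$ is integral over $\Z$. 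On the other hand, $a_1^{e-1}A$ is monic of degree $n-\deg a_1<n$. Hence $a_1(\alpha)^{e-1}A(\alpha)$, written in the power basis, has a coefficient equal to $1$, so $u\notin\Z[\alpha]$. This contradicts monogenicity, so $s=1$.

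\emph{Step 2: the coprimality $(f_1,a_1,N)=1$.} Run Lemma~\ref{lem:polygcd} on $f_1,a_1,N$ and assume no factor of $N$ is found. We obtain $c(x)$ with $(f_1,a_1)+N\Z[x]=(c,N)$, and the leading coefficient of $c$ is a unit mod $N$. If $\deg c=0$, then $c$ is a unit mod $N$, and the ideal contains $1$. This follows from $cB=1+NC$, or equivalently from the unit lemma of the previous section.

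So suppose $\deg c\ge 1$. For any prime $p\mid N$, the reduction $\bar c$ mod $p$ still has degree $\ge1$, because its leading coefficient is a unit. Hence $\bar c$ has a monic irreducible factor $t$ over $\F_p$, and $t$ divides both $\bar f_1$ and $\bar a_1$. Since $e\ge2$, $t^2$ divides $\bar f=\bar a_1^e\bar A$. We now mimic Step 1. Lift $t$ to a monic $\tilde t\in\Z[x]$ and write $a_1=\tilde t\,q+p\,r$ with $q$ monic. Set
\[ v=\frac{q(\alpha)\,a_1(\alpha)^{e-1}A(\alpha)}{p}. \]
Then $\tilde t(\alpha)v=(a_1-pr)(\alpha)\,a_1^{e-1}A(\alpha)/p$. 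The term $a_1^eA(\alpha)/p=-(N/p)f_1(\alpha)$ is integral by Step 1, and the remaining term is visibly integral. Thus $\tilde t(\alpha)v\in\Z[\alpha]$. From this, together with $t\mid \bar f_1$, one shows that $v$ satisfies a monic relation. The cleanest route is to check that $v^2\in\Z[\alpha]$ up to terms divisible by $p$, or to verify that $\Z[\alpha][v]$ is a finitely generated $\Z$-module. Meanwhile $v\notin\Z[\alpha]$, since $q\,a_1^{e-1}A$ is monic of degree $n-\deg t<n$. This again contradicts monogenicity.

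The main obstacle is the integrality of $v$ in Step 2, which is exactly where the hypothesis $t\mid\bar f_1$ (the Dedekind remainder) must be used. If the direct computation gets messy, I would instead invoke the classical Dedekind criterion at $p$. To do so, I would check that, for the lifts coming from (\ref{eq:1}), the Dedekind remainder $(f-\prod \tilde t_i^{e_i})/p$ is congruent mod $(p,t)$ to $(N/p)f_1$, which is $\equiv 0$ mod $(p,t)$ when $p\mid N/p$ and $\equiv$ a unit multiple of $\bar f_1$ otherwise. In either case the remainder is divisible by the repeated factor $t$, so $p$ divides the index, which is impossible.
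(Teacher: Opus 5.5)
Your proof is correct, and it is more complete than the paper's. The paper's entire proof is the one-line remark that $s\ge 2$, or a common factor of $a_1$ and $f_1$ over $\Z/N$, ``violates Dedekind's criterion at $(p,a'(x))$.''

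\textbf{Step 1.} Here you take a genuinely more elementary route. Instead of citing Dedekind, you exhibit the witness $u=a_1(\alpha)^{e-1}A(\alpha)/p$. It satisfies $u^2=-(N^s/p^2)f_1(\alpha)a_1(\alpha)^{e-2}A(\alpha)\in\Z[\alpha]$, and $u\notin\Z[\alpha]$ by degree. This is exactly the mechanism inside the proof of Dedekind's criterion.

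\textbf{Step 2.} Here you make explicit two links the paper skips.
\begin{itemize}
\item Why a run of Lemma~\ref{lem:polygcd} without exception reduces the claim to ruling out a common irreducible factor $t$ of $\bar a_1,\bar f_1$ modulo some prime $p\mid N$. This comes via the unit leading coefficient of $c$.
\item Why such a $t$ violates Dedekind: the remainder $(f-\prod\tilde t_i^{e_i})/p$ is $\equiv (N/p)f_1$ modulo $(p,t)$. This holds for arbitrary monic lifts. Write $a_1=P+pr_1$ and $A=Q+pr_2$ with $P^eQ=\prod\tilde t_i^{e_i}$. Then $(a_1^eA-P^eQ)/p\equiv P^{e-1}(er_1Q+Pr_2)$ mod $p$, which is divisible by $t$ because $e\ge 2$.
\end{itemize}
You are also right to assume $e\ge 2$ in the second part; the statement needs it there. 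For example, take $f=x^2+x+4$, $N=2$, $a_1=x$, $A=x-1$. Then $e=s=1$ and $f_1=x+2$, so $(f_1,a_1,N)=(x,2)\ne(1)$ with no exception thrown.

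\textbf{The weak point: integrality of $v$.} You leave the direct integrality of $v$ in Step~2 as \emph{one shows}, and your first hint does not work. The relations at hand do not give $v^2\in\Z[\alpha]$. For $f=x^3+px+p^2$ (with $N=p$, $a_1=t=x$, $e=3$, $A=1$, $s=1$, $f_1=x+p$) one has $v=\alpha^2/p$ and $v^2=-v-\alpha\notin\Z[\alpha]$.

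\textbf{Fix.} Use the determinant trick on $\mathfrak a=p\Z[\alpha]+\tilde t(\alpha)\Z[\alpha]$.
\begin{itemize}
\item $pv=q(\alpha)a_1(\alpha)^{e-1}A(\alpha)\in\mathfrak a$, because $a_1=\tilde t q+pr\in(p,\tilde t)$ and $e-1\ge 1$.
\item $\tilde t(\alpha)v=-(N/p)f_1(\alpha)-r(\alpha)a_1(\alpha)^{e-1}A(\alpha)\in\mathfrak a$, because $t\mid\bar f_1$ puts $f_1$ in $(p,\tilde t)\Z[x]$.
\end{itemize}
Hence $v\mathfrak a\subseteq\mathfrak a$ for a nonzero finitely generated $\Z$-module $\mathfrak a\subset K$, so $v$ is integral over $\Z$. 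Since $v\notin\Z[\alpha]$ by your degree argument, monogenicity is contradicted.

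With this fix, your proof is fully self-contained: you reprove the needed direction of Dedekind's criterion for both parts. Your fallback is precisely the paper's citation, with the bridging computation it omits now supplied; either route goes through.
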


\begin{proof}
  If  \( s \geq 2 \), or if \( a_1 (x) \)  shares
  factors with \(  f_1(x) \) over \( \Z/N \),
  then it violates the Dedekind's criterion
\cite[page 31]{GW21}\cite{Dedekind1878}
  at \( (p, a'(x)) \),
  where \( p \) is a prime factor of \( N  \),
  and \( a'(x) \) is an irreducible factor of \( a_1(x) \pmod{p} \). 
  In other words,
  \( (p, a'(x)) \) is a maximal ideal corresponding
  to a singular point on the
  arithmetic curve \( f(x) \) over \( \Z \).  
\end{proof}

The above lemma is basically a generalized version of Dedekind's criterion.
We will use it to prove the core technical lemma of this paper.

\begin{lemma}
  Assume that \( e=1 \).
  In polynomial time, we can either find a non-trivial
  factor of \( N \),
  or find two monic integral polynomials \( a_2 (x) \)
  and \( a_3 (x) \),
  such that \( f(x) \) can be rewritten as  
  \begin{equation} \label{eq:2}
   f(x) =f_2(x) N^{s_2} + a_2 (x) a_3 (x) A(x); 
  \end{equation}
  \( a_1 (x)=a_2 (x) a_3 (x) \pmod{N} \);
  \( (a_3 (x), f_2(x))_N =1 \); \( a_2(x) \)
  divides \( f_2(x) \) over \( \Z/N \);
  and \( (a_2(x), A(x), N) = 1\) in \( \Z[x] \).
\end{lemma}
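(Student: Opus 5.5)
We work in the setting of Equation~(\ref{eq:1}) with \( e=1 \): \( f(x)=f_1(x)N^s + a_1(x)A(x) \), where \( f_1\not\equiv 0 \pmod N \) and \( a_1 \) does not divide \( A \) over \( \Z/N \). The idea is to split \( a_1 \) over \( \Z/N \) into a part \( a_2 \) that keeps dividing the ``error term'' as we peel off powers of \( N \), and a part \( a_3 \) that is coprime to it. All splittings are done by running the Euclidean algorithm of Lemma~\ref{lem:polygcd} modulo \( N \); whenever a zero divisor appears we return the resulting non-trivial factor of \( N \).

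\textbf{Step 1: coprimality of \( a_1 \) and \( A \).} Run Lemma~\ref{lem:polygcd} on \( a_1, A, N \). Suppose the output generator is a nonconstant \( c \) (after making it monic, which is possible since leading coefficients are units). Then \( c \) is a common factor of \( a_1 \) and \( A \) over \( \Z/N \), so \( a_1 c \) divides \( f \) modulo \( N \). In this case we replace \( a_1 \) by a product in which some factor has multiplicity \( \ge 2 \). By Lemma~\ref{lem:gended} that repeated factor is coprime to the corresponding \( f_1 \), and we can fold it into the same treatment.

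To keep the plan simple, I would instead first isolate the part of \( a_1 \) coprime to \( A \), shifting any common part into \( A \). This changes \( A \) but not the product \( a_1A \) modulo \( N \). Since \( a_2 \) will be a divisor of \( a_1 \) chosen among factors coprime to \( A \), this yields \( (a_2, A, N)=1 \) in \( \Z[x] \): from a Bézout identity \( a_2u+Av\equiv 1 \pmod N \) we get \( 1 \) in the ideal.

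\textbf{Step 2: the iterative splitting.} Set \( g_0=a_1 \). Repeatedly compute \( h=\gcd(g,f_1) \) over \( \Z/N \) via Lemma~\ref{lem:polygcd}, again extracting a factor of \( N \) if a zero divisor appears.

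If \( h \) is constant, then \( g \) is coprime to \( f_1 \). We set \( a_3 = g \cdot(\text{previously discarded coprime parts}) \) and stop.

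Otherwise \( h \mid f_1 \) modulo \( N \). Write \( a_1=h\,g' \) with monic integral lifts, and note that
\[
 f = N^s f_1 + h g' A \equiv h(N^s q + g'A) \pmod{N^{s+1}},
\]
where \( f_1 = hq + N r \). Absorbing the \( N^{s+1}r \) term, we rewrite
\[
 f = N^{s'} f' + h\,g'A
\]
with an updated lift of \( h \). The correction \( N^s q \) is distributed into \( h \) by replacing \( h \) with \( h+N^s(\cdot) \), using a Bézout relation between \( h \) and \( g'A \) modulo \( N \). That relation exists by Step 1 together with coprimality of \( h \) and \( g' \); if the latter fails, we factor \( N \) or recurse.

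Each round strictly increases \( s \) or strictly decreases \( \deg \) of the part still sharing factors with the error term. The exponent \( s \) is bounded by \( v_N \) of the content of \( f-a_1A \), which is \( O(\log \) of the coefficient size\( ) \), and degrees are at most \( n \). So the loop runs polynomially many times.

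\textbf{Step 3: final verification.} At termination we have \( f = f_2N^{s_2}+a_2a_3A \) with \( a_1\equiv a_2a_3 \pmod N \). Here \( a_3 \) is coprime to \( f_2 \) modulo \( N \) by the stopping rule, \( a_2 \) divides \( f_2 \) modulo \( N \) by construction, and \( (a_2,A,N)=1 \) holds by Step 1.

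\textbf{Main obstacle.} The difficult point is the invariant that \( a_2\mid f_2 \) persists while \( f_2 \) keeps changing as corrections at higher powers of \( N \) are absorbed. Concretely, we need to choose the lift of \( a_2 \) modulo \( N^{s} \) so that the new error term is still divisible by \( a_2 \). This is a Hensel-type lifting of the factorization \( f\equiv a_2\cdot(a_3A) \), which requires \( a_2 \) and \( a_3A \) to be coprime modulo \( N \). Obtaining that coprimality (or otherwise a factor of \( N \)) from Dedekind's criterion in the form of Lemma~\ref{lem:gended} is where I expect the real work to be.
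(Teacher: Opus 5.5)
Your proposal has a genuine gap, in two places.

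\textbf{Step~1 changes the objects in the statement.} You move the part of \(a_1\) shared with \(A\) into \(A\). That replaces both \(a_1\) and \(A\), so you end with \(f=f_2N^{s_2}+a_2a_3A'\) and \(a_2a_3\equiv a_1'\not\equiv a_1 \pmod N\). The lemma, and its use in Theorem~\ref{thm:tech} (which needs \(\tilde a\equiv a\pmod N\)), requires the original \(a_1\) and the original \(A\).

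The step is also unnecessary. The condition \((a_2,A,N)=1\) is not something to engineer; it is forced by monogenicity, exactly as in Lemma~\ref{lem:gended}. Suppose \(p\mid N\) and an irreducible \(t\), with monic lift \(T\), divides both \(a_2\) and \(A\) modulo \(p\). Then:
\begin{itemize}
\item since \(a_2\mid f_2\) modulo \(N\), we get \(f_2\in(p,T)\), hence \(f_2N^{s_2}\in p(p,T)\subseteq(p,T)^2\);
\item since \(a_2\in(p,T)\) and \(A\in(p,T)\), we get \(a_2a_3A\in(p,T)^2\).
\end{itemize}
So \(f\in(p,T)^2\), and by Dedekind's criterion at \((p,t)\) the prime \(p\) divides the index, a contradiction. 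Hence any common part of \(a_1\) and \(A\) automatically lands in \(a_3\). Coprimality modulo every \(p\mid N\) then gives \((a_2,A,N)=\Z[x]\), because \(a_2\) is monic. The same argument shows that distinct factors dividing \(f_2\) are pairwise coprime modulo each \(p\), which is what makes their product divide \(f_2\).

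\textbf{Step~2 fails as written and has no valid termination measure.}
\begin{itemize}
\item The identity \(f=N^sf_1+hg'A\) presumes \(a_1=hg'\) exactly. In general \(a_1=hg'+Nw\), and the extra \(NwA\) term sits at level \(N^1\). The new error coefficient is then \(N^{s-1}f_1+wA\), which can lower \(s\) and destroy divisibility by \(h\).
\item Absorbing \(N^sq\) into a new lift \(h+N^su\) would need \(hq\equiv u\,g'A\pmod N\), i.e.\ \(g'A\mid q\) modulo \(N\). A B\'ezout relation between \(h\) and \(g'A\) does not supply this.
\item Once the lifts change, \(f-a_1A\) changes, so its \(N\)-adic content bounds nothing. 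Indeed, when \(h\) and \(g'A\) are coprime modulo \(N\), the factorization \(f\equiv h\cdot g'A\) lifts modulo every \(N^k\), so ``\(s\) strictly increases'' can go on forever.
\end{itemize}
More to the point, raising \(s\) is not what the lemma asks for, and the Hensel lifting you flag as the main obstacle is not needed.

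\textbf{What the paper does instead: it simply splits.}
\begin{itemize}
\item Keep monic factors \(a_i\) whose exact product in \(\Z[x]\) is \(\equiv a_1\pmod N\), and write \(f=f_2N^{s_2}+(\prod_i a_i)A\).
\item Use Lemma~\ref{lem:polygcd} to compare each \(a_i\) with the current \(f_2\) modulo \(N\), factoring \(N\) if a zero divisor appears.
\item If some \(a_i\) neither divides \(f_2\) nor is coprime to it, the gcd splits it properly. Rewrite \(f\) (the new \(f_2\) may differ) and recheck all factors.
\end{itemize}
No divisibility has to survive a rewrite. Termination comes only from the number of factors being at most \(\deg a_1\), which gives at most \(\deg a_1\) splits and \(O((\deg a_1)^2)\) gcd computations. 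At the end, \(a_2\) is the product of the factors dividing \(f_2\) and \(a_3\) the product of the coprime ones. The Dedekind argument above then supplies \((a_2,A,N)=1\).
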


\begin{proof}
  Starting from Equation~(\ref{eq:1}),
  we run the algorithm from Lemma~\ref{lem:polygcd}
  on \linebreak \( (a_1(x), f_1 (x)) \) and \( N \). 
  If \( a_1(x) \) divides \( g(x) \),
  or the gcd is 1, then we are done. 
  Otherwise, we obtain a factorization of \( a_1(x) \) 
  over \( \Z/N \), and rewrite \( f(x) \) as
  \[ f(x) = f_3(x) N^{s_3} + a_4 (x) a_5 (x) A(x). \]
  Note that \( f_3(x) \) may differ from \( f_1(x) \).
  We run the
  algorithm from Lemma~\ref{lem:polygcd} on \( a_4(x), f_3(x), N \) and
  \( a_5(x), f_3(x), N \), and continue to find factors of \( a(x) \)
  over \( \Z/N \).
  We terminate the process when
  \[ f(x) = f_2 (x) N^{s_2} + ( \prod_i a_{i2}(x)) (\prod_i a_{i3} (x))
    A(x), \]
  each of \( a_{i2}(x) \)  divides \( f_2 (x) \) over \( \Z/N \),  
  and each of \( a_{i3}(x) \) is co-prime to
  \( f_2 (x) \) over \( \Z/N \).  Since \( a_1 (x) \) can be split
  into at most \( \deg(a_1(x)) \) factors, the
  algorithm will terminate eventually.
  The lemma follows by setting
  \[ a_2(x) = \prod a_{i2} (x), a_3(x) = \prod a_{i3}(x).  \]
  Note that \( a_2(x)   \) can not share a factor with \( A(x) \) 
  by the generalized
  Dedekind's criterion. This is crucial for the proof.
\end{proof}

Although we have found some factors of
\( a_1 (x) \) over \( \Z/N \),  
there is in general no efficient way to factor 
polynomials when \( N \) is not a prime.
In fact, even taking square roots modulo \( N \) is
as hard as factoring \( N \).

\begin{theorem} \label{thm:tech}
  Let \( a(x) \) be a factor of \( f(x) \) over \( \Z/N \).
  In polynomial time, we can either find a non-trivial
  factor of \( N \),
  or find a monic integral polynomials \( \tilde{a} (x) \)
  such that \( f(x) \) can be written as  
  \begin{equation}
\label{eq:3}
 f(x) =\tilde{f_1}(x) N + \tilde{a}(x) \tilde{A}(x), 
  \end{equation}
  so that \( a(x)=\tilde{a} (x)  \pmod{N} \),
  and \( (\tilde{a} (x), \tilde{f_1}(x), N)=1 \) in \( \Z[x] \).
\end{theorem}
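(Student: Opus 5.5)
The plan is to construct $\tilde a(x)$ by choosing a suitable lift $a(x)+N u(x)$, where $u(x)$ is obtained from a Bezout identity modulo $N$. Every $\gcd$ or inversion modulo $N$ is done with the algorithm of Lemma~\ref{lem:polygcd}. Whenever that algorithm meets a non-trivial zero divisor, we stop and output the factor of $N$. So throughout I may assume every Euclidean run modulo $N$ succeeds, behaving as if $\Z/N$ were a field.

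\textbf{Step 1: reduce to a single $e$.} Since $a(x)$ is monic and divides $f(x)$ over $\Z/N$, divide to get $f(x)=a(x)A_0(x)+N g(x)$. Run Lemma~\ref{lem:polygcd} on $a(x)$ and $A_0(x)$ modulo $N$, and repeat on the resulting cofactors. This splits $a(x)$ modulo $N$ into monic blocks of two kinds: blocks $b(x)$ coprime to the rest of $f/b$, and blocks that share a factor with $A_0$.

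For a block occurring with multiplicity $e\ge 2$, write $f$ as in Equation~(\ref{eq:1}). Lemma~\ref{lem:gended} then forces $s=1$ and $(f_1,a_1,N)=1$, so any lift of that block already has the required shape.

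If some block splits only partially, so that the multiplicity is not uniform, we either regroup or obtain a factor of $N$ from a failed division. Either way the number of blocks is at most $\deg a$, so this stage stays polynomial.

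\textbf{Step 2: the case $e=1$.} Apply the preceding lemma to get $f=N^{s_2}f_2+a_2a_3A$. Here $a_3$ is coprime to $f_2$ modulo $N$, $a_2$ divides $f_2$ modulo $N$, and $(a_2,A,N)=1$.

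I first check that $(a_2,a_3,N)=1$. A common factor of $a_2$ and $a_3$ modulo a prime $p\mid N$ would divide $f_2$, since $a_2\mid f_2$, and this contradicts $(a_3,f_2)_N=1$. Hence $a_3A$ is invertible modulo $(a_2,N)$.

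Now take $\tilde a=(a_2+Nu)a_3$, reduced to be monic with $\deg u<\deg a_2$. Then
\[ f = N\bigl(N^{s_2-1}f_2 - u\,a_3A\bigr) + \tilde a A, \]
so $\tilde f_1 = N^{s_2-1}f_2-u\,a_3A$. Using the Bezout data from Lemma~\ref{lem:polygcd}, choose $u$ with $u\,a_3A\equiv N^{s_2-1}f_2-1 \pmod{(a_2,N)}$. Then $\tilde f_1\equiv 1$ modulo $(a_2,N)$.

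Modulo $(a_3,N)$ we have two cases. If $s_2=1$, then $\tilde f_1\equiv f_2$, which is coprime to $a_3$. If $s_2\ge 2$, then $e=1$ and the Dedekind argument of Lemma~\ref{lem:gended} should force $a_3=1$ or $a_2$ to absorb everything; I expect that case to collapse in the same way, and will check it separately.

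A CRT/Bezout combination, using $(a_2,a_3,N)=1$, then gives $(\tilde a,\tilde f_1,N)=1$ in $\Z[x]$. This uses the identity $(P,Q,N)=1$ whenever $(P_1,Q,N)=(P_2,Q,N)=1$ and $P=P_1P_2$.

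\textbf{Step 3: combine the blocks.} Take $\tilde a$ to be the product of the block lifts, and compute $\tilde A$ and $\tilde f_1$ by exact division in $\Z[x]$. The coprimality of $\tilde a$ with $\tilde f_1$ modulo $N$ can be checked block by block, because the blocks are pairwise coprime modulo $N$.

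The step I expect to be the main obstacle is this combination. Changing the lift of one block perturbs $\tilde f_1$ by a multiple of the other blocks times $A$. I would handle it by choosing the corrections $u$ sequentially, each one modulo $(\text{block},N)$ only, so that later corrections do not disturb earlier ones modulo their blocks.
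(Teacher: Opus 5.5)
\textbf{Gap: the case $s_2\ge 2$.} Your proposal has a genuine gap in Step~2, in the case $s_2\ge 2$. You defer this case, and there your construction provably fails. Since $\tilde a=(a_2+Nu)a_3$ leaves $a_3$ untouched, $\tilde f_1=N^{s_2-1}f_2-u\,a_3A$ lies in $(a_3,p)$ for every prime $p\mid N$. Hence $(\tilde a,\tilde f_1,N)\subseteq(a_3,p)$ is a proper ideal whenever $a_3$ is nonconstant.

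Your expectation that Dedekind forces $a_3=1$ is wrong. If $f_2$ is coprime to $a_1$ over $\Z/N$, the preceding lemma returns $a_2=1$ and $a_3=a_1$. The paper's Example~\ref{ex:1} is exactly this situation. There $x^4+1=(x^2+268)(x^2-268)+65^2\cdot 17$ gives $s_2=2$, $a_2=1$, $a_3=x^2+268$. Your recipe then outputs $\tilde a=x^2+268$ and $\tilde f_1=65\cdot 17$, which is precisely the bad representative the paper warns about.

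The missing idea, which is what the paper does, is to perturb the whole product when $s_2\ge 2$. Take $\tilde a=a_2a_3+N$, so that $f=N(N^{s_2-1}f_2-A)+\tilde a A$ and $\tilde f_1\equiv -A\pmod{N}$. Then $(\tilde a,\tilde f_1,N)=1$ reduces to $(a_2a_3,A,N)=1$. This follows from the generalized Dedekind criterion: since $f\equiv a_2a_3A\pmod{N^2}$, an irreducible factor modulo $p$ common to $a_2a_3$ and $A$ would make $\Z[\alpha]$ non-maximal at $p$. In the example this yields $x^2+333$, which works just as well as the paper's $x^2+8$. So the real obstacle is this case, not the combination step you flagged.

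\textbf{The rest of the proposal.} The remaining steps are fixable but more complicated than necessary. Your $s_2=1$ argument is correct, but no Bezout solve is needed. With $u=1$ one gets $\tilde f_1\equiv -a_3A$ modulo $(a_2,p)$, which is already coprime to $a_2$ because $(a_2,A,N)=(a_2,a_3,N)=1$; and $\tilde f_1\equiv f_2$ modulo $(a_3,p)$.

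Steps~1 and~3 can be dropped entirely. The paper takes $e$ in Equation~(\ref{eq:1}) to be the multiplicity of $a$ as a whole. Then $e\ge 2$ is settled by Lemma~\ref{lem:gended} with $\tilde a=a$. For $e=1$, the preceding lemma already sorts every irreducible factor of $a$; those shared with $A$ necessarily land in $a_3$, since $(a_2,A,N)=1$. A single global perturbation then finishes.

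As written, your Step~3 also relies on the blocks being pairwise coprime modulo $N$, which Step~1 does not guarantee. For instance, if $a\equiv\pi^2$ and $f\equiv\pi^3B$ modulo $N$ with $\pi$ coprime to $B$, the gcd with $A_0$ splits $a$ as $\pi\cdot\pi$. The ``regroup'' step is also left unspecified.
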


\begin{proof}
  The case of \( e\geq 2 \) is covered by Lemma~\ref{lem:gended}.
  Now assume that \( e=1 \).
  We first find \( a_2(x) \) and \( a_3(x) \) in Equation~(\ref{eq:2}).
  If \( s\geq 2 \), then by Lemma~\ref{lem:gended},
  \( a_2(x) a_3(x) \) can not share a factor with
  \( A(x) \), so we write
  \[ f(x) = (f_2(x) N^{s-1} -   A(x)) N +
    ( a_2 (x)  a_3 (x)+ N ) A(x), \]
  and set \( \tilde{a}(x) =  a_2 (x)  a_3 (x)+ N \)
  and \( \tilde{f_1}(x) = f_2(x) N^{s-1} -   A(x) \).
  If \( s= 1 \), we have
  \[ f(x) = (f_2(x)  -  a_3(x) A(x)) N +
    ( a_2 (x) + N ) a_3 (x) A(x), \]
  and set \( \tilde{a}(x) = ( a_2 (x) + N ) a_3 (x) \)
  and \( \tilde{f_1}(x) = f_2(x)  -  a_3(x) A(x) \).
  In either case
  it can be verified that the resulting form satisfies
  the requirements in the theorem.
\end{proof}

\section{Powering/Multiplying ideals in two-element representations}
\label{sec:powerandmulti}
We now return to an ideal
factor in the RHS of Equation~(\ref{eq:5}).
We will omit the subscription for convenience,
and denote it by \( J =  (N^e, a(\alpha), N I ) \). 
Recall that \( N \) is not a perfect power.
We find \( \tilde{a}(x) \) satisfying
the condition in Equation~(\ref{eq:3}),
and read off a factor \( (N, \tilde{a} (\alpha)) \) from \( J \).
We also find \( g(\alpha) \)
so that \( a(\alpha) = \tilde{a}(\alpha) + N g(\alpha) \).
According to Theorem~\ref{thm:tech}, we may write
\begin{equation}\label{eq:4} 
 f(x) = f_1 (x) N + \tilde{a} (x) A(x).  
\end{equation}
with \( (f_1(x), \tilde{a}(x), N) = 1 \) in \( \Z[x] \)
(which implies that \( f_1(x) \not= 0 \pmod{N} \) ). 
We will show that the condition will enable fast ideal arithmetic.

\begin{example}
  In Example~\ref{ex:1}, we can see that it is
  easier to multiply when using \( a(x) = x^2+8 \),
  rather than \(x^2 + 268\), as the second generator,
  even though \( (65, x^2 + 8) = (65, x^2 + 268) \).
  It is because that
  \[ f(x)= x^4+1 = (x^2+8) (x^2-8)+65  \]
  so \( x^2+8 \) satisfies the conditions in
  Equation~(\ref{eq:4}), whereas \( x^2+268 \) does not,
  as
  \[ f(x) = x^4+1 = (x^2+268) (x^2-268)+65^2*17.  \]
\end{example}

\begin{lemma}\label{lem:genN}
  For any positive integer \( s \), \( N \in (N^s, \tilde{a}(\alpha) )\).
\end{lemma}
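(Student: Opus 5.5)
The plan is to combine the relation $f(\alpha)=0$ with the coprimality condition $(f_1(x),\tilde{a}(x),N)=1$ from Theorem~\ref{thm:tech}, which gives $N \in (N^2,\tilde{a}(\alpha))$. An induction on $s$ then finishes the argument.

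\textbf{Step 1 (Bezout relation).} Because $(f_1(x),\tilde{a}(x),N)=1$ in $\Z[x]$, there are polynomials $u(x),v(x),w(x)\in\Z[x]$ with
\[ u(x) f_1(x) + v(x)\tilde{a}(x) + w(x) N = 1 . \]
These can be produced explicitly from the output of Lemma~\ref{lem:polygcd}. For the statement itself we only need that they exist.

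\textbf{Step 2 (multiply by $N$ and use the defining relation).} Multiply the Bezout relation by $N$ and evaluate at $\alpha$:
\[ N = u(\alpha)\, f_1(\alpha) N + v(\alpha) N\,\tilde{a}(\alpha) + w(\alpha) N^2 . \]
Equation~(\ref{eq:4}) together with $f(\alpha)=0$ gives $f_1(\alpha)N = -\tilde{a}(\alpha)A(\alpha)$. So the first term equals $-u(\alpha)A(\alpha)\tilde{a}(\alpha)$, which lies in $\tilde{a}(\alpha)O_K$. The second term also lies in $\tilde{a}(\alpha)O_K$, and the third lies in $N^2O_K$. Hence $N\in(N^2,\tilde{a}(\alpha))$.

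\textbf{Step 3 (induction on $s$).} For $s=1$ the claim is trivial. Suppose $N\in(N^s,\tilde{a}(\alpha))$ for some $s\ge 2$, say $N = N^s x + \tilde{a}(\alpha) y$. Multiplying by $N$ gives
\[ N^2 = N^{s+1}x + \tilde{a}(\alpha)Ny \in (N^{s+1},\tilde{a}(\alpha)). \]
Substituting this into Step 2 yields $N\in(N^{s+1},\tilde{a}(\alpha))$. Equivalently, since $N\in(N^2,\tilde{a})$, the ideal $(N,\tilde{a})$ equals $(N^2,\tilde{a})$, and the same squaring trick pushes the exponent up one step at a time.

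The only substantive point is Step 2. It is exactly where the choice of $\tilde{a}$ from Theorem~\ref{thm:tech} matters: with a bad representative such as $x^2+268$ in Example~\ref{ex:1}, the factor $f_1$ would carry an extra power of $N$ and the argument would fail. The rest of the proof is bookkeeping.
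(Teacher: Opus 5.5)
Your proof is correct and follows the paper's argument. The paper derives the same identity $N(1-Nt(\alpha)) = \tilde{a}(\alpha)\bigl(a'(\alpha)N - f_1'(\alpha)A(\alpha)\bigr)$ from the B\'ezout relation and $f_1(\alpha)N = -\tilde{a}(\alpha)A(\alpha)$, which is your Step 2; it then finishes by inverting the unit $1-Nt(\alpha)$ modulo $N^s$, and your induction on $s$ is exactly that geometric-series inversion carried out one step at a time.
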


\begin{proof}
Since \( (\tilde{a}(x), f_1 (x), N ) = 1 \),
there must exist integer polynomials \( a'(x), f_1'(x) \)
and \( t(x) \) such that
\[ \tilde{a}(x) a'(x) + f_1(x) f_1'(x) + N t(x) = 1.  \]
We also have
\[ f_1 (\alpha) N + \tilde{a}(\alpha) A(\alpha) =0 \]
so
\[ f_1'(\alpha) f_1 (\alpha) N + f_1'(\alpha) \tilde{a}(\alpha) A(\alpha) =0 \]
\[ (1 - \tilde{a}(\alpha) a'(\alpha) - N t(\alpha)) N + f_1'(\alpha)
  a(\alpha) A(\alpha) =0 \]
\[ (1 -  N t(\alpha)) N = \tilde{a}(\alpha) a'(\alpha) N
  - f_1'(\alpha) \tilde{a} (\alpha) A(\alpha)  \]
\[ (1 -  N t(\alpha)) N = \tilde{a}(\alpha) ( a'(\alpha) N
  - f_1'(\alpha)   A(\alpha)  ) \]
The statement follows from the fact that
\[ 1- N t(\alpha)  \] is a unit in
\( \Z[\alpha]/(N^{s}) \).
\end{proof}

By setting \( \bar{a}(\alpha) = (a'(\alpha) N
- f_1'(\alpha)  A(\alpha))/(1 - N t(\alpha)) \in O_K \),
we conclude:

\begin{corollary}
  For \( \tilde{a} (x) \) in Equation~\ref{eq:4},
  we can find \( \bar{a}(\alpha) \) in deterministic
  polynomial time such that
  \[ \tilde{a}(\alpha)  \bar{a}(\alpha)  = N \pmod{N^s}  \]
  for any \( s\geq 2 \). Moreover
  \[ (N, \tilde{a}(\alpha)) (N, \bar{a}(\alpha)) = N   \]
\end{corollary}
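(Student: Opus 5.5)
The first claim follows from the identity in the proof of Lemma~\ref{lem:genN}. That proof shows
\[ (1 - N t(\alpha)) N = \tilde{a}(\alpha)\bigl(a'(\alpha) N - f_1'(\alpha) A(\alpha)\bigr). \]
Since \( 1 - N t(\alpha) \) is a unit modulo \( N^s \) (by the preceding lemma on units \( A + N g(x) \) with \( A = 1 \)), we take its inverse explicitly as the truncated geometric series \( u(\alpha) = \sum_{j=0}^{s-1} (N t(\alpha))^j \), which lies in \( \Z[\alpha] \). Multiplying both sides by \( u(\alpha) \) gives
\[ \tilde{a}(\alpha)\bar{a}(\alpha) \equiv N \pmod{N^s}, \qquad \bar{a}(\alpha) := \bigl(a'(\alpha) N - f_1'(\alpha) A(\alpha)\bigr) u(\alpha). \]
This is interpreted modulo \( N^s \), so \( \bar a \) is a genuine element of \( \Z[\alpha] \subseteq O_K \). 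The polynomials \( a', f_1', t \) are produced deterministically by the algorithm of Lemma~\ref{lem:polygcd} applied to \( \tilde a, f_1, N \); this is the same computation that certified \( (\tilde a, f_1, N) = 1 \) in Theorem~\ref{thm:tech}. Hence everything is polynomial time.

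For the second claim, expand the product:
\[ (N, \tilde a)(N, \bar a) = (N^2, N\tilde a, N\bar a, \tilde a \bar a). \]
For the inclusion \( \subseteq (N) \), the first three generators are clearly multiples of \( N \). Taking \( s = 2 \), we have \( \tilde a \bar a \equiv N \pmod{N^2} \), so \( \tilde a\bar a \in N O_K \). For the reverse inclusion, note that \( \tilde a\bar a - N \in N^2 O_K \subseteq \) product. Since \( \tilde a \bar a \) is also in the product, \( N \) lies in the product ideal. This gives equality with \( N O_K \).

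The only delicate point is making sure \( \bar a \) is integral and that the congruence holds for the chosen \( s \). The division by \( 1 - Nt(\alpha) \) in the paper's definition of \( \bar a \) is replaced by multiplication by the truncated inverse, which depends on \( s \). If one wants a single \( \bar a \) valid for all \( s \) simultaneously, the exact quotient generally lies only in \( K \), not \( O_K \). I would therefore state the construction for a fixed target \( s \), which suffices both for the corollary and for later powering, where \( s \) is bounded by the exponent \( e \).
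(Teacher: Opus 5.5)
Your proposal is correct and follows the paper's own route: it takes the identity \((1-Nt(\alpha))N=\tilde a(\alpha)\bigl(a'(\alpha)N-f_1'(\alpha)A(\alpha)\bigr)\) from the proof of Lemma~\ref{lem:genN} and divides by \(1-Nt(\alpha)\) as a unit modulo \(N^s\). The paper stops at defining \(\bar a(\alpha)\) as that quotient, so your explicit truncated-series inverse, your observation that \(\bar a(\alpha)\) must depend on \(s\), and your two-inclusion check of \((N,\tilde a(\alpha))(N,\bar a(\alpha))=NO_K\) fill in what it leaves implicit. The dependence on \(s\) is genuinely needed: the exact quotient equals \(N/\tilde a(\alpha)\), which need not lie in \(O_K\), so the paper's ``\(\in O_K\)'' only holds when read modulo \(N^s\).
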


Now we are ready to calculate \( J/ (N, \tilde{a}(\alpha))  \)
\begin{lemma}
\label{lem:other}
Let \( \tilde{a} (x) \) satisfy the conditions
in Equation~\ref{eq:4}.
Given an ideal \( J = (N^e, \tilde{a}(\alpha) + N g(\alpha), N I) \),
where \( I = (d_1 (\alpha), d_2(\alpha), \cdots, d_k (\alpha)) \),
there is a polynomial time algorithm that either finds a non-trivial
factor of \( N \), or calculate \( J/ (N, \tilde{a}(\alpha)) \),
and normalize it into a form
\( (N^e, a' (\alpha) , N I'), \)
where \( a' (x) \) is a factor of \( f(x) \)
over \( \Z/N \), and \( I' \) has at most \( k+2 \) generators.
\end{lemma}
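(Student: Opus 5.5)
Dividing by $(N,\tilde a(\alpha))$ amounts to multiplying by its inverse and dividing by $N$. By the Corollary after Lemma~\ref{lem:genN}, $(N,\tilde a(\alpha))(N,\bar a(\alpha)) = N$, so
\[ J/(N,\tilde a(\alpha)) = \tfrac{1}{N}\, J\,(N,\bar a(\alpha)). \]
I would first expand the product $J\,(N,\bar a(\alpha))$ generator by generator. The generators are
\[ N^{e+1},\quad N^e\bar a,\quad N(\tilde a+Ng),\quad \bar a(\tilde a+Ng),\quad N^2 I,\quad N\bar a I. \]
Using $\tilde a\bar a\equiv N \pmod{N^{s}}$ with $s=e+1$, we get $\bar a(\tilde a+Ng) = N + Ng\bar a + N^{e+1}u$ for some $u\in O_K$. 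So every generator is divisible by $N$ in $O_K$, and dividing by $N$ gives
\[ J' = \bigl(N^e,\; N^{e-1}\bar a,\; \tilde a+Ng,\; 1+g\bar a+N^e u,\; N I,\; \bar a I\bigr). \]
The term $N^e u$ can be dropped because $N^e\in J'$. So far this is routine bookkeeping with the explicit $\bar a$, which can be computed deterministically.

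Next I would normalize $J'$ into the shape $(N^e, a'(\alpha), N I')$. The generators not divisible by $N$ are $\tilde a+Ng$, $1+g\bar a$, $\bar a d_j$, and possibly $N^{e-1}\bar a$ when $e=1$. Reducing these modulo $N$ gives polynomials over $\Z/N$, together with $f$. I would run the algorithm of Lemma~\ref{lem:polygcd} with modulus $N$ repeatedly on $f$ and on these reductions. Each run either exposes a nontrivial factor of $N$, which is an allowed outcome, or replaces two generators $u,v$ by $c$ and $N d$. After folding in all of them, the ideal becomes $(N^e, a'(\alpha), N\cdot\text{stuff})$ with $a'$ dividing $f$ over $\Z/N$, because $f$ was included in the gcd. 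The Bezout data from Lemma~\ref{lem:polygcd} lets one rewrite the old generators as combinations of $a'$ and $N(\cdot)$.

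The main obstacle is keeping the count of generators of $I'$ at most $k+2$. The leftover $N$-multiples are $N d$ from each gcd step, $N I$ itself, and the multiples of $N$ that remain after reducing each $\bar a d_j$ modulo $a'$. I would handle this as follows:
\begin{itemize}
\item Compute $a'$ from $\tilde a+Ng$, $1+g\bar a$ and $f$ only. This costs two gcd steps and yields two extra generators $N d, N d'$.
\item Reduce each $\bar a d_j$ by $a'$ over $\Z/N$. Since $a'$ is monic this is possible, and it writes $\bar a d_j = q_j a' + r_j + N(\cdot)$.
\item Argue that $r_j$ lies in the gcd and hence is $\equiv 0 \pmod N$. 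Then $\bar a d_j - q_j a'$ is $N$ times some $d'_j$, which replaces the generator $d_j$.
\end{itemize}
Together with the generators $d_j$ of $I$, whose multiples by $N$ are absorbed via $N d_j$ and $\bar a d_j$, this gives $k+2$ generators in total. The delicate point is justifying that folding in $\bar a d_j$ does not change $a'$ modulo $N$. If that fails, each extra gcd step only shrinks $a'$ and I would accept the extra $N$-multiples, merging them by absorbing the $Nd$'s into the $N d'_j$ slots.
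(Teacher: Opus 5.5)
\textbf{Gap: the bound of at most $k+2$ generators for $I'$ is not established.} Your computation of the quotient is the paper's: you expand $J\,(N,\bar a(\alpha))$, divide by $N$, and choose $\bar a$ for $s=e+1$. The problem is the normalization step. You compute $a'$ from $\tilde a+Ng$, $1+g\bar a$ and $f$ alone, and then claim that each remainder $r_j$ of $\bar a d_j$ modulo $a'$ is $\equiv 0 \pmod N$. That claim is false, because nothing forces $\bar a d_j\in (N,a'(\alpha))$.

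Here is a counterexample. Take $f=x^2+1$, $N=5$ and $\tilde a=x-2$, so $f=5\cdot 1+\tilde a\,(x+2)$ and $f_1=1$. Then $\bar a=-(\alpha+2)$ satisfies $\tilde a\bar a=5$ exactly. Choose $g=-1$, $e=2$, $I=(\bar a)$.
\begin{itemize}
\item Both $\tilde a+Ng=x-7$ and $1+g\bar a=x+3$ reduce to $x-2$ modulo $5$, so your $a'$ is $x-2$.
\item But $\bar a d_1=\bar a^2=3+4\alpha\equiv 1$ modulo $(5,\alpha-2)$.
\item In fact $J=(25,\alpha-7,5\bar a)=(5,\alpha-2)$, so the true quotient is $O_K$.
\end{itemize}
Your fallback also fails. ``Merging'' the extra $Nd$'s into the $Nd'_j$ slots does not preserve the ideal, since two generators $Nu,Nv$ cannot in general be replaced by one. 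Your tally also leaves out $N^{e-1}\bar a$, which is needed in general. And it keeps $NI$ without showing that it can be discarded.

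\textbf{How the paper's count actually goes.} Two observations fix this.
\begin{itemize}
\item $NI$ is redundant in $J'$. Put $h'=(\tilde a\bar a)/N+g\bar a\in J'$ and write $\tilde a\bar a=N+N^{e+1}u$. Then $Nd_j=Nd_j h'-Ng\,\bar a d_j-N^{e+1}u\,d_j$, which lies in the ideal generated by $h'$, $\bar a I$ and $N^e$. This is why the last line of the paper's display drops $NI$.
\item $f$ does not need to be fed into the gcd. The output of Lemma~\ref{lem:polygcd} divides each of its inputs modulo $N$. Since $\tilde a+Ng\equiv\tilde a$ already divides $f$ over $\Z/N$, any gcd started from $\tilde a+Ng$ automatically divides $f$ over $\Z/N$.
\end{itemize}
Now fold the $k+2$ generators that are not multiples of $N$, namely $\tilde a+Ng$, $h'$ and $\bar a d_1,\dots,\bar a d_k$, using $k+1$ applications of Lemma~\ref{lem:polygcd}. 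This produces $k+1$ multiples of $N$. Together with $N^{e-1}\bar a$ they give $I'$ with at most $k+2$ generators. When $e=1$ we have $N\in J'$, so $I'$ can be taken empty.
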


\begin{proof}
  We have
\begin{align*}
  &J/ (N, \tilde{a}(\alpha))\\
  =&  (N, \bar{a}(\alpha))(N^e, \tilde{a}(\alpha) +N g(\alpha), N I)/N\\
  =&  (N^{e+1}, N \tilde{a}(\alpha) +N^2 g(\alpha),
     N^2 I, N^{e} \bar{a}(\alpha),
      \tilde{a}(\alpha) \bar{a}(\alpha)+ N g(\alpha)\bar{a}(\alpha),
     \bar{a}(\alpha) N I)/N\\
  =&  (N^{e},  \tilde{a}(\alpha) +N g(\alpha),
     N I, N^{e-1} \bar{a}(\alpha),
     ( \tilde{a}(\alpha) \bar{a}(\alpha) )/N
     + g(\alpha)\bar{a}(\alpha),
     \bar{a}(\alpha)  I)\\
  =&  (N^{e},  \tilde{a}(\alpha) +N g(\alpha),
     ( \tilde{a}(\alpha) \bar{a}(\alpha) )/N
     + g(\alpha)\bar{a}(\alpha),
     \bar{a}(\alpha)  I, N^{e-1} \bar{a}(\alpha))
\end{align*}
Now we run the algorithm from Lemma~\ref{lem:polygcd} on
the polynomial generators that are not multiples of
\( N \), and normalize the above ideal into a form
\( (N^e, a'(\alpha) , N I') \)
where \( I' \) has at most \( k+2 \) generators.
\end{proof}

We could, of course, reduce the number of generators in \( I' \),
but it may not be worth the computation effort.
We should continue producing ideal factors and put them in
the preferred form.
The following theorem provides a simple algorithm
to multiply them into one ideal in two-element representation.

\begin{theorem}
  If \( \tilde{a}_1(x), \tilde{a}_2(x), \cdots, \tilde{a}_k(x) \) all
  satisfy the conditions in Equation~(\ref{eq:4}),
  then
  \[ (N, \tilde{a}_1(\alpha)) (N, \tilde{a}_2 (\alpha)) \cdots (N, \tilde{a}_k(\alpha))
  = (N^k, \tilde{a}_1(\alpha)\tilde{a}_2(\alpha)\cdots \tilde{a}_k(\alpha))\]
\end{theorem}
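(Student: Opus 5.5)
We argue by induction on \( k \); the case \( k=1 \) is trivial. Write \( \beta_i = \tilde{a}_i(\alpha) \) and \( P_k = \beta_1\cdots\beta_k \). The inclusion \( \supseteq \) is immediate, since \( N^k \) and \( P_k \) both occur among the products of generators of the left-hand side. For \( \subseteq \) we assume, by the induction hypothesis,
\[ (N,\beta_1)\cdots(N,\beta_{k-1}) = (N^{k-1}, P_{k-1}). \]
Multiplying by \( (N,\beta_k) \) gives
\[ (N^{k-1}, P_{k-1})(N,\beta_k) = (N^k,\ N^{k-1}\beta_k,\ N P_{k-1},\ P_k). \]
So it suffices to show that the two middle generators \( N^{k-1}\beta_k \) and \( N P_{k-1} \) lie in \( (N^k, P_k) \).

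The key input is Lemma~\ref{lem:genN}: for each \( i \) and every \( s \), \( N \in (N^s, \beta_i) \). Taking \( s=k \), we can write
\[ N = u_i N^k + v_i \beta_i \]
for some \( u_i, v_i \in O_K \).

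For \( N P_{k-1} \), substitute this expression for \( N \) with \( i=k \):
\[ N P_{k-1} = u_k N^k P_{k-1} + v_k P_k \in (N^k, P_k). \]

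For \( N^{k-1}\beta_k \), write \( N^{k-1} \) as the product \( N\cdot N\cdots N \) and replace the \( j \)-th factor by \( u_j N^k + v_j\beta_j \) for \( j=1,\dots,k-1 \). Expanding, every term containing some \( u_j \) carries a factor \( N^k \). The single remaining term is \( v_1\cdots v_{k-1}\,\beta_1\cdots\beta_{k-1}\beta_k = v_1\cdots v_{k-1}P_k \). Hence \( N^{k-1}\beta_k \in (N^k,P_k) \).

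In fact this expansion argument proves the theorem directly, without induction. Every generator of the left-hand product has the form \( N^{m}\prod_{i\in S}\beta_i \) with \( m = k-|S| \). Replacing each factor \( N \) belonging to an index \( j\notin S \) by \( u_jN^k+v_j\beta_j \) shows that this generator lies in \( (N^k, P_k) \).

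The only delicate step is justifying that Lemma~\ref{lem:genN} applies to each \( \tilde{a}_i \) with exponent \( s=k \). This is exactly where the hypothesis of Equation~(\ref{eq:4}) is used: \( (f_1,\tilde{a}_i,N)=1 \) makes \( 1-Nt(\alpha) \) invertible modulo \( N^k \). The lemma holds for arbitrary \( s \), so this is fine. The example \( x^2+268 \) in Example~\ref{ex:1} shows why the hypothesis cannot be dropped.
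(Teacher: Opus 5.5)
Your proposal is correct, and your direct expansion argument is essentially the paper's proof. The paper also uses Lemma~\ref{lem:genN} with exponent \( k \) to put each surplus factor \( N \) into \( (N^k,\tilde{a}_{i_j}(\alpha)) \), and then expands to show that every cross term \( N^{k-s}\tilde{a}_{i_1}(\alpha)\cdots\tilde{a}_{i_s}(\alpha) \) lies in \( (N^k,\tilde{a}_1(\alpha)\cdots\tilde{a}_k(\alpha)) \); your inductive version is just a repackaging of that same computation.
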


\begin{proof}
  Let \( 1\leq s < k \) be an integer,
  and \( i_1, i_2, \cdots, i_s, i_{s+1}, \cdots i_k \)
  be a permutation of \( 1, 2, 3, \cdots, k \).
  We prove the theorem by showing that 
  \( N^{k-s} \tilde{a}_{i_1}(\alpha) \tilde{a}_{i_2}(\alpha) \cdots \tilde{a}_{i_s}(\alpha)\), which is a term in the expansion of LHS,
  is in the RHS ideal. 
  By applying Lemma~\ref{lem:genN}, we observe that
  for \( s+1 \leq j \leq k \),
  \[ N \in  (N^k, \tilde{a}_{i_{ j }}(\alpha) ), \]
  and derive:
  \begin{align*}
     & N^{k-s} \tilde{a}_{i_1}(\alpha) \tilde{a}_{i_2}(\alpha) \cdots \tilde{a}_{i_s}(\alpha)\\
    \in& ( \tilde{a}_{i_1}(\alpha) \tilde{a}_{i_2}(\alpha) \cdots \tilde{a}_{i_s}(\alpha)  )(N^k, \tilde{a}_{i_{s+1}}(\alpha) )
          \cdots (N^k, \tilde{a}_{i_k}(\alpha) )\\
  \subseteq& (N^k, \tilde{a}_1 (\alpha)\tilde{a}_2(\alpha)\cdots \tilde{a}_k(\alpha))
  \end{align*}
\end{proof}

Note that we do not require \( \tilde{a}_i(x) \) to be
relatively prime to each other over \( \Z/N \).
This theorem provides mathematical foundation for stage 2
of our algorithm. For pseudo-code, see Algorithm~\ref{alg:stage2}.

\begin{corollary}
  For any positive integer \( s \), we have
  \[ (N, \tilde{a}(\alpha))^s = (N^s, \tilde{a}(\alpha)^s) \] 
\end{corollary}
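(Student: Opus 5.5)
The plan is to specialize the preceding theorem to the case \( \tilde{a}_1(x) = \tilde{a}_2(x) = \cdots = \tilde{a}_s(x) = \tilde{a}(x) \). The theorem explicitly does not require the \( \tilde{a}_i(x) \) to be pairwise coprime over \( \Z/N \), so repeated factors are allowed. Each copy of \( \tilde{a}(x) \) satisfies the conditions of Equation~(\ref{eq:4}), namely \( f(x) = f_1(x)N + \tilde{a}(x)A(x) \) with \( (f_1(x), \tilde{a}(x), N) = 1 \). Applying the theorem with \( k = s \) then gives \( (N, \tilde{a}(\alpha))^s = (N^s, \tilde{a}(\alpha)^s) \) immediately. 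The case \( s = 1 \) is trivial.

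I would also give a self-contained direct argument, so the corollary does not depend on how the permutation bookkeeping in the theorem is phrased. The inclusion \( (N^s, \tilde{a}(\alpha)^s) \subseteq (N, \tilde{a}(\alpha))^s \) is clear, because both generators on the left are products of \( s \) generators of \( (N, \tilde{a}(\alpha)) \). For the reverse inclusion it suffices to show that every monomial \( N^{s-j}\tilde{a}(\alpha)^j \) with \( 0 \le j \le s \) lies in \( (N^s, \tilde{a}(\alpha)^s) \). The cases \( j = 0 \) and \( j = s \) are generators.

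For \( 0 < j < s \), Lemma~\ref{lem:genN} (applied with exponent \( s \)) gives \( N \in (N^s, \tilde{a}(\alpha)) \). Raising this to the power \( s-j \) yields \( N^{s-j} \in (N^s, \tilde{a}(\alpha))^{s-j} \). Multiplying by \( \tilde{a}(\alpha)^j \) gives
\[ N^{s-j}\tilde{a}(\alpha)^j \in \tilde{a}(\alpha)^j (N^s, \tilde{a}(\alpha))^{s-j}. \]
Every product in the expansion of the right side either contains a factor \( N^s \), so it lies in \( N^s O_K \), or is exactly \( \tilde{a}(\alpha)^j\tilde{a}(\alpha)^{s-j} = \tilde{a}(\alpha)^s \). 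Hence the right side is contained in \( (N^s, \tilde{a}(\alpha)^s) \), which completes the reverse inclusion.

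The only point needing care is the use of Lemma~\ref{lem:genN}. That lemma needs \( 1 - Nt(\alpha) \) to be a unit modulo \( N^s \), which holds by the unit lemma proved earlier for any exponent. So there is no real obstacle: the argument is a formal consequence of Lemma~\ref{lem:genN}.
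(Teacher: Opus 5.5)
Your proposal is correct and matches the paper, which gives no separate proof: the corollary is obtained exactly by specializing the preceding theorem to \( \tilde{a}_1=\cdots=\tilde{a}_s=\tilde{a} \). Your direct argument is that theorem's proof via Lemma~\ref{lem:genN}, with the permutation bookkeeping reduced to the monomials \( N^{s-j}\tilde{a}(\alpha)^j \).
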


\begin{algorithm}[h]\label{alg:stage2}
  \caption{Sketch of the algorithm in the second stage}
    \KwIn{ The stack R1 and  R2 from the first stage.}
    \KwOut{ Updated R1}
    \While{R2 is not empty}{
    \Try{}{
      Pop an ideal \( J= ({N^{e}}, a(\alpha), {N} *I) \) from R2\; 
       \( Int \gets 1 \)\;
       \( Poly \gets 1 \)\;
    \While{\( {a(\alpha)}\not= 1 \pmod{{N}} \)}
      {
        Find \( \tilde{a}(x) \) for \( a(x) \) as in Theorem~\ref{thm:tech}\;
        \( Int \gets Int * \mathtt{N} \)\;
        \( Poly \gets Poly * \tilde{a}(\alpha) \)\;
        \( J \gets J/ (\mathtt{N}, \tilde{a}(\alpha)) \) 
         (Lemma~\ref{lem:other})\;
         Normalize \( J \) and update
         \( {a,  I} \) so that
         \( J = ({N^{e}}, {a(\alpha)} , {N} * {I}) \)\; 
    }
    Push the ideal \( (Int, Poly) \) into R1\;
    }
    \Catch{\( {N} \) is factored}
    {\For{each factor \( N_1^{e_1} \) of \( N \) with
          \( gcd(N_1, {N}^{\mathtt{e}}/N_1^{e_1}) =1 \)}{
          Push \( (N_1^{e e_1}, a(\alpha),
          {N_1}*  (N_1^{e_1 - 1} I )) \)
          into the stack R2\;
    \tcc{Since \(N_1 | \mathtt{N}\), the ideal is in the right form.}
     }
    }
    }
\end{algorithm}

\section{Put the algorithm together}

The third stage of our algorithm, described in
Algorithm~\ref{alg:stage3}, is a straight-forward
application of Lemma~\ref{lem:relativeprime}.
Given an input ideal, we run Algorithm~\ref{alg:stage1},
Algorithm~\ref{alg:stage2} and Algorithm~\ref{alg:stage3}
in sequence, obtaining two elements that generate
the input.
The building block of our algorithm is the Euclidean
algorithm for integral polynomials over \( \Z/N \),
where \( N \) is a factor of \( {\cal N} \) and
is not a perfect power. To analyze the complexity, we count
the number of gcd calculations.
Since small prime factors of \( {\cal N} \) have been
taken care efficiently by the Pohst-Zassenhaus algorithm,
we focus on \( {\cal N} \) free of small prime factors.
The most time-consuming part of the algorithm involves
handling perfect powers.
In the generic case when \( {\cal N} \) is free of
repeated factors, the algorithm will need
at most \( \omega({\cal N}) \) gcd calculations
---which is on average \( \log\log {\cal N} \)---
resulting in a complexity of
\( O (n^2 \log {\cal N} \omega({\cal N}) )\).
Even when \( {\cal N} \) has repeated factors,
the algorithm may not discover them as an exception
and may thus achieve similar complexity.

\begin{algorithm}[h]\label{alg:stage3}
  \caption{Sketch of the algorithm in the third stage}
  \KwIn{A stack R1}
      \KwOut{ two elements $M \in \Z$ and $c(x) \in \Z[x]$ }
    \While{R1 is not empty}{
      pop the stack \( n1, b1 \)
      \uIf{R1 is empty}
        {\Return \( (n1, b1) \)\;}
      \Else{
        Pop the stack to \( (n2, b2) \)\;
        \( n = n1 * n2 \)\;
        $m1 = n1^{-1} \pmod{n2}$\;
        $m2 = n2^{-1} \pmod{n1}$\;
        \( b = n2 * m2 * b1 + n1 * m1 * b2  \)\;
        Push \( (n,b) \) into R1\;
        }
   }
 \end{algorithm}

 \section{Conclusion and open problems}
In this paper, we present an efficient implementation of
ideal arithmetic for monogenic polynomials.
Cyclotomic polynomials are monogenic,
making our algorithms directly applicable to
the most important case for cryptography based on ideal lattices.
Our algorithm may be viewed as a
generalization of the Pohst-Zassenhaus algorithm
to composite integers.

For a polynomial \( f \) that is not
monogenic, the field \( \Q[x]/f \) may still be
monogenic. An open problem is to find such a monogenic
polynomial for the field in polynomial time.
Due to the existence of inessential
discriminant divisors, we may not be able to find a monogenic
polynomial for every field, however, a question can
certainly be raised about finding a polynomial \( f' \)
efficiently to  minimize the index \( [O_K:\Z[x]/f'] \).
Because inessential discriminant divisors are in general very small, resolving this question could lead to substantial gains in computational efficiency.

\end{document}